\documentclass[11pt,reqno]{amsart}

\usepackage[T1]{fontenc}
\usepackage{lmodern}
\usepackage{amsmath,amssymb,amsthm,mathtools}
\usepackage{microtype}
\usepackage[hidelinks,hypertexnames=false]{hyperref}
\usepackage{url}

\allowdisplaybreaks
\newtheorem{theorem}{Theorem}[section]
\newtheorem{proposition}[theorem]{Proposition}
\newtheorem{lemma}[theorem]{Lemma}
\newtheorem{corollary}[theorem]{Corollary}
\theoremstyle{definition}

\theoremstyle{remark}
\newtheorem{remark}[theorem]{Remark}

\newcommand{\R}{\mathbb R}
\newcommand{\Pf}{\operatorname{Pf}}
\newcommand{\supp}{\operatorname{supp}}
\newcommand{\TN}{\mathrm{TN}}
\newcommand{\one}{\mathbf 1}
\newcommand{\cB}{\mathcal B}
\newcommand{\cM}{\mathcal M}

\title[Colombo's determinant problem]
  {Colombo's Determinant Problem}
\author{Qianli Ma}
\address{College of Computer Science and Technology, Zhejiang University,
  Hangzhou, China}
\address{WuJie AI, Hangzhou, China}
\email{qianli.ma@zju.edu.cn}
\thanks{ORCID: 0009-0004-8256-0999.}
\date{Draft for expert circulation, August 17, 2026}

\subjclass[2020]{Primary 15A15; Secondary 15B48, 41A15}
\keywords{Colombo determinant problem, Pfaffian, distance matrices, total nonnegativity,
truncated powers, B-splines}

\begin{document}

\begin{abstract}
We completely solve Colombo's 1928 determinant problem.  For distinct real
$x_1,\ldots,x_N$, $N\geq2$, and an integer $D\geq1$,
\[
 \det[(x_j-x_i)^D]\neq0
 \quad\Longleftrightarrow\quad
 D\geq N-1\ \text{and}\ (N\ \text{is even or }D\ \text{is even}).
\]
The even-exponent case follows from Dyn--Goodman--Micchelli (1986); the
remaining odd case is proved by a strict Pfaffian sign theorem.  The new
odd-exponent theorem and its complete proof chain have also been formalized
in Lean~4.
\end{abstract}

\maketitle

\section{Introduction and main theorem}\label{sec:intro}

For integers $N\geq2$ and $D\geq1$, and distinct real numbers
$x_1,\ldots,x_N$, set
\begin{equation}\label{eq:colombo-matrix}
 A_{N,D}(x)=\bigl((x_j-x_i)^D\bigr)_{i,j=1}^{N}.
\end{equation}
Simultaneously reordering the nodes conjugates $A_{N,D}$ by a permutation
matrix, so we may assume $x_1<\cdots<x_N$.  We prove the complete
classification.

\begin{theorem}[Colombo's determinant problem]\label{thm:colombo}
Let $N\geq2$, let $x_1,\ldots,x_N$ be distinct real numbers, and let
$D\geq1$ be an integer.  Then
\begin{equation}\label{eq:complete-classification}
 \det A_{N,D}(x)\neq0
 \quad\Longleftrightarrow\quad
 D\geq N-1\ \text{and}\ (N\ \text{is even or }D\ \text{is even}).
\end{equation}
\end{theorem}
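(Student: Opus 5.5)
The plan splits \eqref{eq:complete-classification} into its three ingredients: the rank obstruction when $D<N-1$, the skew-symmetry obstruction when $N$ and $D$ are both odd, and nonsingularity in the remaining cases. We may assume $x_1<\cdots<x_N$.

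\emph{Rank obstruction.} Expanding binomially,
\[
(x_j-x_i)^D=\sum_{k=0}^{D}\binom{D}{k}(-x_i)^{D-k}x_j^{k}.
\]
So $A_{N,D}=U\,\mathrm{diag}\bigl(\binom Dk\bigr)\,V^{T}$, where $U,V$ are $N\times(D+1)$ Vandermonde-type matrices. Hence $\operatorname{rank}A_{N,D}\le D+1$, and $\det A_{N,D}=0$ whenever $D+1<N$. For $D\ge N-1$ the same factorization, combined with Cauchy--Binet, writes $\det A_{N,D}$ as a sum over $N$-subsets $S\subset\{0,\dots,D\}$ of products of generalized Vandermonde minors. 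This is a useful sanity check but not, on its own, a sign-definite expansion.

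\emph{Parity obstruction.} If $D$ is odd then $A_{N,D}^{T}=-A_{N,D}$. For $N$ odd this gives $\det A=(-1)^N\det A=-\det A$, so $\det A=0$.

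\emph{Even $D\ge N-1$.} Here $A$ is symmetric. I would invoke Dyn--Goodman--Micchelli, who treat conditionally positive definite kernels of the form $|x-y|^{D}$ and truncated powers. Write $(x_j-x_i)^D=|x_j-x_i|^D$. Their result gives nonsingularity of the interpolation matrix for $|t|^{D}$ with $D$ even once $N\le D+1$. A self-contained alternative is to write $|t|^D$ via Peano/B-spline kernels and obtain a Gram-type structure of constant sign; I would cite them rather than reprove it.

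\emph{Main case: $D$ odd, $N$ even, $D\ge N-1$.} Now $A$ is skew-symmetric, so $\det A=\Pf(A)^2$, and it suffices to show $\Pf(A)\ne0$. The plan is to prove a strict sign statement: $(-1)^{\epsilon(N,D)}\Pf(A_{N,D}(x))>0$ for all increasing $x$. The steps, in order, are as follows.

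\begin{enumerate}
\item Write $(x_j-x_i)^D=\bigl((x_j-x_i)_+^D-(x_i-x_j)_+^D\bigr)$. Represent the truncated power through the Peano kernel,
\[
(b-a)_+^D=D!\int (b-t)_+^{D-1}\cdots,
\]
or, equivalently, as $\int M(t)\,dt$ of B-splines. The goal is to express $A=\int K(s,t)\,d\mu$ with $K$ a product of totally nonnegative kernels $(x_j-t)_+^{m}$.
\item Use the Pfaffian analogue of Cauchy--Binet, the minor summation formula of Ishikawa--Wakayama. This expresses $\Pf(B^{T}JB)$ as a sum of $N\times N$ minors of $B$ times Pfaffians of the middle skew matrix $J$. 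Choose the factorization so that the minors are collocation minors of truncated powers (or B-splines), which are nonnegative by total nonnegativity (Karlin, Schoenberg--Whitney), and so that the middle Pfaffians have a fixed sign.
\item Show strictness by exhibiting one term that is strictly positive. A Schoenberg--Whitney interlacing condition holds because $D\ge N-1$ gives enough degrees of freedom; this is exactly where the hypothesis enters.
\end{enumerate}

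The main obstacle is step 2: finding a factorization $A=B^{T}JB$ with $J$ skew, whose Pfaffian minors are all of one sign, while $B$ is totally nonnegative. I would first test it for $N=2,4$ with $D=3,5$ to identify $\epsilon(N,D)$, which I expect to be $N/2$ modulo 2 or similar. I would then try $J$ equal to the antisymmetrized kernel $\operatorname{sgn}(t-s)$ on a continuum. Its Pfaffians over ordered points are all equal to $1$, which gives the desired constant sign.
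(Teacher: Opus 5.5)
Your rank, skew-symmetry and even-$D$ steps are exactly the paper's reduction. Everything therefore rests on the case $N=2m$, $D=2r+1\ge N-1$, which is Theorem~\ref{thm:main}. That is where your proposal has a genuine gap: step~2 is left open, and the concrete mechanism you suggest provably cannot work.

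Suppose $A=B^TJB$ with $J_{st}=\operatorname{sgn}(t-s)$. Minor summation (de Bruijn in the continuum) writes $\Pf A$ as a sum or integral of $\det B_I$ over ordered $2m$-sets $I$, each with coefficient $\Pf J_I=1$. So a totally nonnegative $B$ forces $\Pf A\ge0$. Reversing the orientation of $J$, of the $t$-axis, or of the nodes only changes this to $(-1)^m\Pf A\ge0$. The true sign is $(-1)^{\binom m2}$. For $m=2$, $D=3$, $x=(0,1,2,3)$ one gets $\Pf A=1\cdot1-8\cdot8+27\cdot1=-36<0$, whereas $1$ and $(-1)^m$ both equal $+1$ there. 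So no factorization with a TN $B$ and the sign kernel exists. Your guess that the sign exponent is $N/2$ modulo $2$ also fails at $m=2$; the correct exponent is $\binom m2$.

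The missing ideas are a paired, rank-two-per-point skew representation and a new sign theorem for mixed truncated-power determinants.
\begin{itemize}
\item The paper uses the Beta identity $A=c_r\int(L(t)R(t)^T-R(t)L(t)^T)\,dt$, with $L_i(t)=(t-x_i)_+^r$ and $R_i(t)=(x_i-t)_+^r$.
\item De Bruijn then gives $(-1)^{\binom m2}\Pf A=c_r^m\int_{t_1<\cdots<t_m}H_r(X;t,t)\,dt$. The sign $(-1)^{\binom m2}$ comes from regrouping the $L$ columns before the $R$ columns.
\item The integrand mixes left and right truncated powers (two oppositely oriented fans). Its nonnegativity does not follow from Karlin or Schoenberg--Whitney total nonnegativity.
\item The paper proves $H_p\ge0$ at the critical degree $p=m-2$ by factoring $\cM_d=\cB_T(X)C$. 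It shows the two-fan coefficient matrix $C$ is TN: solid minors via sliding-root products and Desnanot--Jacobi condensation, then a staircase recognition theorem.
\item Volterra convolution propagates this to all $p\ge m-2$. The threshold is sharp (Remark~\ref{rem:sharpness}), so this positivity does not come for free.
\item Strictness comes from the central simplex $x_m<t_1<\cdots<t_m<x_{m+1}$. There $H_r$ splits into two separated-power determinants, which are positive precisely because $r\ge m-1$.
\end{itemize}
That last step, together with the threshold $r\ge m-2$, is where $D\ge N-1$ actually enters, not a Schoenberg--Whitney degree count.
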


The matrix arose from a boundary-value problem for hyperbolic partial
differential equations, not from matrix theory.  In his 1928 ICM
communication Colombo considered a constant-coefficient strictly hyperbolic
operator whose characteristic directions are real and distinct.  In modern
notation one may write its homogeneous part as
\[
 P=\prod_{s=1}^{N}(\partial_x+\lambda_s\partial_y),
 \qquad \lambda_1,\ldots,\lambda_N\ \text{distinct},
\]
with separated solutions
\[
 u(x,y)=\sum_{s=1}^{N}\varphi_s(y-\lambda_sx).
\]
Restricting to the characteristic lines $y=\lambda_r x$ and comparing the
coefficient of degree $D$ produces exactly the matrix
$((\lambda_r-\lambda_s)^D)_{r,s}$.  Colombo recorded the lower-degree rank
obstruction, the nonsingular endpoint $D=N-1$, and the parity phenomena, and
conjectured that for even $N$ the determinant is nonzero for every
$D\geq N-1$; he verified the cases $N=4,6$ by special arguments
\cite[pp.~35--36]{Colombo1930}.

The later history split into two rather different directions.  Colombo
returned to fourth-order equations in 1932 and 1935
\cite{Colombo1932,Colombo1935}.  Sj\"ostrand's 1929 thesis developed
arbitrary-order Goursat theory for a different common-point,
noncharacteristic configuration \cite{Sjostrand1929}, while Rosati's 1966
paper treated a third-order characteristic polygon problem and described the
older Colombo--Sj\"ostrand line as part of a broader boundary-value program
\cite{Rosati1966}.  In this PDE literature, the power-difference determinant
itself does not appear to have become a standard object.  A plausible
historical explanation is that the PDE questions moved toward other boundary
geometries and compatibility problems, whereas the tools naturally suited to
\eqref{eq:colombo-matrix} were being developed in approximation theory and
matrix positivity.  This separation of communities may help explain why the
original determinant problem slipped from view.

Meanwhile those tools matured.  Schoenberg's 1946 work laid foundations for
modern spline theory \cite{Schoenberg1946}; Schoenberg--Whitney connected
spline interpolation with positivity of translation determinants in 1953
\cite{SchoenbergWhitney1953}; de Bruijn's Pfaffian integration formula
appeared in 1955 \cite{deBruijn1955}; Marsden's reproduction identity in
1970 \cite{Marsden1970}; and Karlin systematically linked total positivity
and spline interpolation in 1971 \cite{Karlin1971}.  By the mid-1980s the
classical toolkit used below was essentially mature: de Boor--DeVore gave a
particularly transparent proof of total positivity for B-spline collocation
matrices in 1985 \cite{deBoorDeVore1985}.  The two-fan coefficient matrix and
the support-aware recognition step required below are not supplied by these
classical results, but the surrounding language and positivity machinery were
already in place.

There is also a second, previously separate, route into Colombo's problem.
For
\[
 B_\alpha=\bigl(|x_i-x_j|^\alpha\bigr)_{i,j=1}^{N},
\]
Dyn, Goodman, and Micchelli computed its singularity and inertia behavior in
1986 \cite{DynGoodmanMicchelli1986}; see also the modern summary
\cite[\S6]{BhatiaJain2015}.  In particular, $B_\alpha$ is singular exactly
when $\alpha$ is an even integer smaller than $N-1$.  Since translating all nodes by the same constant does not change $B_\alpha$, we
may place the nodes in the positive half-line if required by the formulation of
the cited theorem.  Since $A_{N,D}=B_D$ for even $D$, their theorem already
settles the complete even-exponent branch of Colombo's determinant problem.  What remained is the
case of even matrix size and odd exponent.

We use the convention
\[
 \Pf\begin{pmatrix}0&a\\-a&0\end{pmatrix}=a,
 \qquad A_{ij}=(x_j-x_i)^D.
\]
The new part of the paper is the following strict sign theorem.

\begin{theorem}[Odd-exponent Colombo theorem]\label{thm:main}
Let $m\geq1$, let $x_1<\cdots<x_{2m}$ be real, and let $r\geq m-1$ be
an integer.  Put $D=2r+1$.  Then
\begin{equation}\label{eq:main-sign}
 (-1)^{\binom m2}\Pf\bigl[(x_j-x_i)^{2r+1}\bigr]_{i,j=1}^{2m}>0.
\end{equation}
Consequently,
\begin{equation}\label{eq:main-det}
 \det\bigl[(x_j-x_i)^{2r+1}\bigr]_{i,j=1}^{2m}>0.
\end{equation}
\end{theorem}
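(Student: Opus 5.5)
The plan is to convert the Pfaffian into a positive integral of determinants of a totally nonnegative kernel, using de Bruijn's Pfaffian integration formula \cite{deBruijn1955}. Strictness will then come from a Schoenberg--Whitney type argument. Since $\det A=\Pf(A)^2$ for the skew-symmetric matrix $A=A_{N,D}(x)$ (here $D$ is odd), \eqref{eq:main-det} follows immediately from \eqref{eq:main-sign}. So everything reduces to the strict sign of the Pfaffian.

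\emph{Step 1 (skew kernel factorization).} I would look for a representation
\[
 (y-x)^{2r+1}=c_r\iint \operatorname{sgn}(t-s)\,\varphi(x,s)\,\varphi(y,t)\,ds\,dt ,\qquad c_r>0.
\]
The kernel $\varphi$ would be built from the two families of truncated powers $(x-s)_+^{r}$ and $(s-x)_+^{r}$, the "two fans" issuing from each node, restricted to a compact window containing all the nodes. The identity can be checked by differentiating $r+1$ times in each variable: $\partial_x^{r+1}\partial_y^{r+1}(y-x)^{2r+1}$ is an explicit multiple of $y-x$, and integrating $\operatorname{sgn}(t-s)$ against two point-masses gives a signum-type function. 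The polynomial corrections are then absorbed by Marsden's identity \cite{Marsden1970}, which expands $(y-x)^{2r+1}$ into truncated powers. The point of the construction is to make every entry of $A$ a bilinear sign-kernel integral with the same function $\varphi$ on both sides.

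\emph{Step 2 (de Bruijn).} Applying de Bruijn's formula to the $2m$ functions $\varphi(x_i,\cdot)$ gives
\[
 \Pf(A)=c_r^{m}\int_{s_1<\cdots<s_{2m}}\det\bigl[\varphi(x_i,s_k)\bigr]_{i,k=1}^{2m}\,ds .
\]
A fixed reordering of the columns, which separates the "right fan" functions from the "left fan" functions, contributes exactly the sign $(-1)^{\binom m2}$. After this reordering, the integrand becomes a minor of a kernel that I would show is totally nonnegative. To do so, I would identify it as a B-spline collocation-type kernel and invoke the de Boor--DeVore total positivity \cite{deBoorDeVore1985} together with Karlin's theory \cite{Karlin1971}. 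This yields $(-1)^{\binom m2}\Pf(A)\geq0$.

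\emph{Step 3 (strictness; main obstacle).} It remains to exhibit an open set of ordered points $s_1<\cdots<s_{2m}$ on which the minor is strictly positive. By the Schoenberg--Whitney criterion \cite{SchoenbergWhitney1953}, a collocation minor is positive exactly when the interlacing conditions hold, that is, each $s_k$ lies in the interior of the support of the corresponding basis function. The hypothesis $r\geq m-1$ must enter precisely here: the supports of the truncated powers of degree $r$ must be long enough to admit an interlacing choice for all $2m$ nodes at once. I expect this support-aware recognition step to be the hardest part, because the two-fan matrix is not a standard B-spline collocation matrix. I would handle it by explicitly placing $s_{2k-1},s_{2k}$ on either side of each node $x_k$ and verifying the diagonal-support condition by induction on $m$.
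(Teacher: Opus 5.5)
\textbf{Main gap: Step~2, where almost all the difficulty lies.}

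The representation that works is the paired Beta identity $(x_j-x_i)^{2r+1}=c_r\int\bigl((t-x_i)_+^r(x_j-t)_+^r-(t-x_j)_+^r(x_i-t)_+^r\bigr)\,dt$, with $c_r=(2r+1)!/(r!)^2$. Feeding it into the \emph{paired} (exterior) form of de Bruijn's identity gives $(-1)^{\binom m2}\Pf A=c_r^m\int_{t_1<\cdots<t_m}H_r(X;t,t)\,dt$, where $H_r(X;t,t)=\det\bigl[(t_j-x_i)_+^r\mid(x_i-t_j)_+^r\bigr]$. The sign comes from regrouping $[L(t_1),R(t_1),\ldots,L(t_m),R(t_m)]$.

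This integrand juxtaposes two oppositely oriented fans on the same knots. It is not a B-spline collocation matrix, and citing de Boor--DeVore or Karlin does not determine its sign. Juxtaposed two-fan systems are in fact not TN in general. The paired determinants $H_p(X;S,U)$ with $s_j<u_j$ are negative in explicit examples with $p=m-3$; for instance $m=3$, $p=0$ gives $H_0=-1$. So nonnegativity is a degree-threshold phenomenon valid only for $p\geq m-2$. Any argument that does not tie the degree to $m$ at this stage proves too much. Your remark that $r\geq m-1$ enters only at strictness is a symptom of this.

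What is missing is a proof of the paired split-determinant inequality $H_p(X;S,U)\geq0$ for $p\geq m-2$. The paper obtains it as follows.
\begin{enumerate}
\item A Marsden factorization at the critical degree $d=m-2$: $[(s_j-x_i)_+^d\mid(x_i-u_j)_+^d]=\cB_T(X)\,C$, where $\cB_T(X)$ is TN by de Boor--DeVore.
\item The new work is total nonnegativity of the two-fan coefficient matrix $C$. Solid minors of order $\leq m-1$ come from a sliding-root Vandermonde product. Solid minors of order $\geq m$ come from Desnanot--Jacobi condensation on a width-one support interval.
\item A monotone-interval staircase lemma passes from solid minors to all minors. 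The case $m=2$ is done by hand.
\item An exact Volterra convolution lifts $H_{m-2}\geq0$ to all $p\geq m-2$, and one lets $U\downarrow S=t$ by continuity.
\end{enumerate}
None of this is supplied by your plan.

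\textbf{Step~1 does not work as written.} Your derivative check is wrong: $\partial_x^{r+1}\partial_y^{r+1}(y-x)^{2r+1}=0$, because the total degree is $2r+1$. Worse, suppose $s$ is the knot of $\varphi(x,s)=\alpha(s)(s-x)_+^r+\beta(s)(x-s)_+^r$, as your check presupposes. Then $\partial_x^{r+1}\partial_y^{r+1}\iint\operatorname{sgn}(t-s)\varphi(x,s)\varphi(y,t)\,ds\,dt=(r!)^2\gamma(x)\gamma(y)\operatorname{sgn}(y-x)$ with $\gamma=\beta+(-1)^{r+1}\alpha$. Making this vanish forces $\varphi=\alpha(s)(s-x)^r$. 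That has degree $r$ in $x$ and cannot reproduce $(y-x)^{2r+1}$ as an identity in $x,y$. So the single-$\varphi$ sign-kernel form of de Bruijn is the wrong vehicle. In that form there is also no fixed split of the $2m$ columns into $m$ left and $m$ right ones to produce $(-1)^{\binom m2}$.

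\textbf{Step~3 is not the obstacle.} Schoenberg--Whitney is not available for the two-fan system in any case. On the central simplex $x_m<t_1<\cdots<t_m<x_{m+1}$ the matrix is block diagonal, and $H_r(X;t,t)=\det[(t_j-x_i)^r]_{i,j\leq m}\det[(x_{m+i}-t_j)^r]_{i,j\leq m}$. Each factor is positive after expanding $(Y_i+Z_j)^r$ about a point of the gap and applying Cauchy--Binet. The hypothesis $r\geq m-1$ is exactly what makes the exponent set $\{0,\ldots,m-1\}$ available there.
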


At the minimal exponent $D=N-1$, this is the classical Torelli Pfaffian
identity; a modern normalization appears in
\cite[Corollary~5.2]{EhrenborgFox2014}.  Theorem~\ref{thm:main} covers every
larger odd exponent as well.

\begin{proof}[Proof of Theorem~\ref{thm:colombo}]
If $D<N-1$, then $\operatorname{rank}A_{N,D}\leq D+1<N$, since the
columns are evaluations of polynomials of degree at most $D$.  If both $N$
and $D$ are odd, then $A_{N,D}$ is an odd-dimensional skew-symmetric matrix
and is singular.  If $D$ is even, then $A_{N,D}=B_D$, and the theorem of
Dyn--Goodman--Micchelli cited above says that it is nonsingular exactly when
$D\geq N-1$.  The only remaining case is $N=2m$ and odd
$D\geq N-1$, which is Theorem~\ref{thm:main}.
\end{proof}

For $m=1$, the Pfaffian is the single positive entry
$(x_2-x_1)^{2r+1}$.  Assume henceforth that $m\geq2$.  The proof of the new
odd-exponent theorem has four load-bearing steps.

\begin{enumerate}
\item At the critical degree $d=m-2$, a paired determinant of left and
right truncated powers is nonnegative.  Marsden reproduction reduces this
to total nonnegativity of a rectangular two-fan coefficient matrix.
\item Its supported solid minors are evaluated by a sliding-root product at
small order and by subtraction-free Desnanot--Jacobi condensation at large
order.  A monotone-interval staircase lemma promotes the solid signs to all
minors without assuming total nonnegativity.
\item Exact Volterra convolution propagates the paired inequality from
degree $m-2$ to every higher integer degree.
\item A Beta integral and de Bruijn's Pfaffian identity express the signed
Pfaffian as an integral of those paired determinants.  An explicit central
simplex makes the integral strictly positive.
\end{enumerate}

Classical total-positivity results treat one-sided augmented truncated-power
kernels and individual B-spline systems; see, for example,
\cite{Karlin1971,Bojanov1990}.  They do not directly cover the two oppositely
oriented moving fans under the coordinatewise pairing used below.  Likewise,
the dispersion--Pl\"ucker mechanism has established antecedents in the
literature on almost strictly totally positive matrices
\cite{GascaMicchelliPena1992,Gladwell2004,AlonsoPenaSerrano2022}.  Those
results do not supply a noncircular shortcut: in particular, the 1992 ASTP
definition assumes total nonnegativity at the outset.  We therefore include
the support-aware recognition argument in full.

The paper is organized along the four steps above.  The discontinuous
degree-zero boundary $m=2$ is proved separately in Appendix~\ref{app:m2},
and Appendix~\ref{app:ledger} collects all constants and signs.

\section{Paired split determinants}\label{sec:split}

For $y\in\R$ and an integer $p\geq0$, write
\[
 y_+^p=\begin{cases}y^p,&y>0,\\0,&y\leq0.\end{cases}
\]
Thus $y_+^0=\one_{y>0}$.  Let
\[
 X=(x_1<\cdots<x_{2m}),\quad
 S=(s_1<\cdots<s_m),\quad
 U=(u_1<\cdots<u_m),
\]
and assume
\begin{equation}\label{eq:pair-condition}
 s_j<u_j\qquad(1\leq j\leq m).
\end{equation}
Define
\begin{equation}\label{eq:Hp}
 H_p(X;S,U)=
 \det\left[
  (s_j-x_i)_+^p\ \middle|\ (x_i-u_j)_+^p
 \right]_{\substack{1\leq i\leq2m\\1\leq j\leq m}}.
\end{equation}

\begin{theorem}[Paired split-determinant inequality]\label{thm:split}
For $m\geq2$ and every integer $p\geq m-2$,
\begin{equation}\label{eq:Hp-nonnegative}
 H_p(X;S,U)\geq0.
\end{equation}
The assertion includes arbitrary coincidences between the two lists $S$
and $U$ that are compatible with \eqref{eq:pair-condition}, and sampled
points may lie on knots.
\end{theorem}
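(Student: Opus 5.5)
The plan follows the first three steps of the outline in Section~\ref{sec:intro}. First we prove the base case $p=m-2$, then we propagate to larger $p$ by convolution. For the base degree $d=m-2$ we use Marsden's identity to expand every column in a common spline basis. Fix a knot sequence $T$ that contains all $s_j$ and $u_j$ with multiplicity $d+1$, padded by auxiliary knots outside $[x_1,x_{2m}]$. Then each left fan function $(s_j-\cdot)_+^{d}$ and each right fan function $(\cdot-u_j)_+^{d}$ restricted to the sample interval is a finite combination of normalized B-splines $N_{k,d}$ on $T$. The coefficients are explicit: they are the dual-polynomial values $\prod_{\ell}(s_j-t_{k+\ell})_+$ and the analogous products for $u_j$, up to signs. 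This gives the factorization
\[
 \bigl[(s_j-x_i)_+^{d}\mid(x_i-u_j)_+^{d}\bigr]=\bigl[N_{k,d}(x_i)\bigr]_{i,k}\,C ,
\]
where $C$ is the rectangular ``two-fan'' coefficient matrix. By Cauchy--Binet, $H_d$ is a sum of products of $2m\times 2m$ minors of the B-spline collocation matrix with $2m\times2m$ minors of $C$. The collocation minors are nonnegative (de Boor--DeVore, Karlin). It therefore suffices to show that $C$ is totally nonnegative after a fixed column ordering that interleaves the fans. That ordering is the one forced by \eqref{eq:pair-condition}: each $s_j$ precedes its partner $u_j$, and any residual sign is absorbed so that the identity case $S,U$ widely separated gives the positive orientation.

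The main obstacle is the total nonnegativity of $C$. The two fans move in opposite directions, so classical one-sided results do not apply. I would first identify the support pattern of $C$. Each column is supported on an interval of B-spline indices: left fan columns on an initial segment ending near $s_j$, right fan columns on a terminal segment starting near $u_j$. These supports are monotone in $j$, so $C$ has a staircase shape. Next I would evaluate the solid (contiguous-row, contiguous-column) minors whose rows lie inside the supports. For small order, expanding the products of $(s-t)$ factors gives a sliding-root product formula, which is manifestly a product of nonnegative differences. For larger order, I would use Desnanot--Jacobi condensation. This expresses a solid minor through smaller solid minors without subtraction, once the relevant denominators are known to be positive. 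Finally I need a staircase lemma in the spirit of the ASTP dispersion arguments, proved directly so that total nonnegativity is not assumed. It should state that for a matrix with monotone interval supports, nonnegativity of all supported solid minors, together with positivity of the ``diagonal'' ones, forces all minors to be nonnegative. I expect this recognition step, done noncircularly, to be the hardest part.

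For $p>m-2$ we use the exact Volterra convolution $\int_{\R}(s-t)_+^{p-1}(t-x)_+^{0}\,dt=(s-x)_+^p/p$ and its mirror for the right fan. Convolving one column pair at a time writes $H_{p}$ as a positive multiple of an integral of $H_{p-1}$ over shifted knots $(s_j-\sigma_j,\,u_j+\tau_j)$. Because these shifts preserve the ordering $s_j<u_j$ and the monotonicity of $S$ and $U$, or are handled by a symmetrization when the lists cross, the integrand is nonnegative by induction. Equivalently, we can shift both $S$ leftward and $U$ rightward by one common parameter. Multilinearity of the determinant in columns lets the integral pass outside. Coincident knots and sample points on knots are handled by continuity of $H_p$ in $(S,U)$ for $p\geq1$. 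For $p=0$, i.e.\ $m=2$, the function is discontinuous, so that case is treated by a direct check of the $4\times4$ indicator determinant, as in Appendix~\ref{app:m2}.
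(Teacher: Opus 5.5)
Your outline has the same architecture as the paper: Marsden factorization, total nonnegativity of the collocation matrix and of a two-fan coefficient matrix $C$ via solid minors and a staircase recognition lemma, then Volterra propagation. But the load-bearing steps are asserted rather than proved, and several of your concrete choices would make them fail.

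\emph{The large-order solid minors.} This is the decisive gap. Desnanot--Jacobi reads $DD_{\mathrm{mid}}=D_{\mathrm{TL}}D_{\mathrm{BR}}-D_{\mathrm{TR}}D_{\mathrm{BL}}$, and positivity of the denominator $D_{\mathrm{mid}}$ does nothing to remove the minus sign. The missing idea is the paper's width-one lemma. From the Dyck structure of the merged ranks ($\alpha_a\le\beta_{a-1}+1$, and $\beta_j-j$ nondecreasing), a seam minor of order $q\ge m$ is supported only for row offsets $\rho$ in $[m+\beta_h-q,\alpha_a]$, an interval of length at most one. Hence at each admissible $\rho$ one of $D_{\mathrm{TR}},D_{\mathrm{BL}}$ is unsupported and vanishes, and only then is $D=D_{\mathrm{TL}}D_{\mathrm{BR}}/D_{\mathrm{mid}}>0$.

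\emph{The set-up choices.} Two of them break your plan.
\begin{enumerate}
\item Knot multiplicity. With every $s_j,u_j$ of multiplicity $d+1$, the B-splines become Bernstein blocks. Because every column is continuous, the last coefficient on $[z_k,z_{k+1}]$ and the first on $[z_{k+1},z_{k+2}]$ both equal the value at $z_{k+1}$. These duplicated rows make supported solid minors vanish. The recognition lemma, however, needs \emph{strict} positivity of every supported solid minor, since the Pl\"ucker step divides by one; your ``nonnegativity plus positivity of the diagonal ones'' is too weak. The paper instead uses the merged knots $z_0<\cdots<z_{2m-1}$ as simple knots, which suffices because $(s-x)_+^d$ is $C^{d-1}$ at $s$.
\item Column order. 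It must be the block order $[L_1,\ldots,L_m,R_1,\ldots,R_m]$ in which $H_d$ is defined. There the supports $[0,\alpha_j]$ and $[m-1+\beta_j,3m-2]$ have nondecreasing endpoints, and no sign needs absorbing. In any order placing some $R_j$ before some $L_k$, the $2\times2$ minor on the first and last rows is negative, because the first row meets only $L$ supports and the last row meets only $R$ supports. So $C$ is not $\TN$ in an interleaved order.
\end{enumerate}

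\emph{The propagation step.} Independent shifts $a_j=s_j-\sigma_j$, $b_j=u_j+\tau_j$ do \emph{not} preserve $a_1<\cdots<a_m$ or $b_1<\cdots<b_m$. Where they cross, $H_{p-1}(X;a,b)$ equals the paired determinant of the sorted lists times the signs of the two sorting permutations, which can be $-1$. So the integrand is not pointwise nonnegative, and induction cannot be applied to it directly.

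The ``symmetrization'' you allude to is the actual content of the step. Folding the order chambers turns the weight on the ordered chamber into $\det[\one_{a_i<s_j}]\det[\one_{u_j<b_i}]$. You must then check that these $0$--$1$ staircase determinants are $0$ or $1$. They equal $1$ exactly (up to boundary faces) on the interlacings $a_1<s_1<\cdots<a_m<s_m$ and $u_1<b_1<\cdots<u_m<b_m$. On that support $a_i<s_i<u_i<b_i$, so $H_{p-1}(X;a,b)\ge0$ there.

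Your ``one common parameter'' alternative is false. The determinant is multilinear in its columns, not linear in the matrix, so each column needs its own integration variable. Already in the $2\times2$ case, with $p=1$ and $x_1<s<u<x_2$, one gets $\int_0^\infty H_0(X;s-\sigma,u+\sigma)\,d\sigma=\min(s-x_1,x_2-u)$. This is not a multiple of $H_1=(s-x_1)(x_2-u)$.
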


The critical case is the main work.

\begin{proposition}[Critical degree]\label{prop:critical}
For $m\geq2$,
\begin{equation}\label{eq:critical}
 H_{m-2}(X;S,U)\geq0.
\end{equation}
\end{proposition}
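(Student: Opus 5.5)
The plan is to follow the route sketched in step (1) of the introduction. Set $d=m-2$, so that every column of \eqref{eq:Hp} with $p=d$ is a spline of degree $d$ in the sample point $x$. The first task is to factor $H_d$ as a product of a B-spline collocation matrix and a coefficient matrix.

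\textbf{Step 1: expand the columns in B-splines.} Choose a single nondecreasing knot sequence $T$ that contains all the $s_j$ and $u_j$, together with enough auxiliary knots outside $[x_1,x_{2m}]$. By Marsden's identity, each left-oriented function $(s_j-x)_+^d$ and each right-oriented function $(x-u_j)_+^d$ is, on the relevant interval, a linear combination of the degree-$d$ normalized B-splines $N_{k}$ on $T$. The coefficients are explicit products of knot differences $\prod_{\ell}(s_j-t_{k+\ell})$ or $\prod_\ell(t_{k+\ell}-u_j)$, restricted to the B-splines whose support lies on the correct side of the knot. Writing $M=[N_k(x_i)]$ for the $2m\times K$ collocation matrix and $C$ for the $K\times 2m$ coefficient matrix, we get $[\,\cdot\,]=MC$. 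Cauchy--Binet then gives
\[
H_d=\sum_{|I|=2m}\det M_{:,I}\,\det C_{I,:}.
\]
By Schoenberg--Whitney and the total positivity of collocation matrices \cite{deBoorDeVore1985,Karlin1971}, each $\det M_{:,I}\ge 0$. So it suffices to show that every $2m\times2m$ row-minor of the two-fan matrix $C$ is nonnegative, under \eqref{eq:pair-condition} and with the column order $(s_1,\dots,s_m\mid u_1,\dots,u_m)$.

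\textbf{Step 2: check total nonnegativity of $C$, the main obstacle.} $C$ consists of two oppositely oriented fans: a left fan supported on B-spline indices below each $s_j$, and a right fan supported above each $u_j$. Each fan separately is a polynomial Vandermonde-type kernel, and it is totally positive up to sign. The difficulty is the interleaving. The pairing $s_j<u_j$ is exactly what should make the nonzero pattern a staircase.

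I would first handle the \emph{solid} (contiguous) supported minors. At small order, the minor should factor as a sliding-root product: the entries are $\prod_\ell (s_j-t_{k+\ell})$, so the determinant reduces to a Vandermonde-like product of differences $t-s$, $u-t$, all of which have a definite sign by the support constraints. At large order I would use the Desnanot--Jacobi identity
\[
\det C\cdot\det C^{1,n}_{1,n}=\det C^{1}_{1}\det C^{n}_{n}-\det C^{1}_{n}\det C^{n}_{1},
\]
and arrange it in a subtraction-free form by induction on order, with the solid minors as base cases.

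Next, a staircase lemma in the style of the ASTP theory \cite{GascaMicchelliPena1992} transfers the signs from solid minors to all minors. This lemma must not assume total nonnegativity in advance. To achieve that, I would induct on the dispersion of the row and column index sets. The Plücker relations express a minor with a gap as a positive combination of minors with smaller dispersion, and the support pattern guarantees that the zero minors are precisely the ones predicted by the staircase.

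\textbf{Step 3: remove the genericity assumptions.} I would first assume $S\cup U$ distinct and all $x_i$ off the knots. Continuity of the degree-$d$ truncated powers then extends the inequality $H_d\ge0$ to coincidences and to sampled points lying on knots when $d\ge1$, i.e.\ $m\ge3$. The discontinuous degree-zero case $m=2$ needs a separate direct argument, in which $H_0$ is a $4\times4$ determinant of indicator functions. I expect its sign can be checked by case analysis on the relative positions of $x_i,s_j,u_j$.
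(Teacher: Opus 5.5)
Your outline follows the paper's own route: the Marsden factorization $\cM_d=\cB_T(X)C$, total nonnegativity of the collocation matrix, Cauchy--Binet, solid minors of $C$ by a sliding-root product at small order and by Desnanot--Jacobi at large order, promotion to all minors by a Pl\"ucker/dispersion induction, continuity for $m\ge3$, and a separate check for $m=2$. At the one step with real content, however, you state the conclusion rather than a mechanism.

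\textbf{The missing idea (width-one lemma).} The identity $D\,D_{\mathrm{mid}}=D_{\mathrm{TL}}D_{\mathrm{BR}}-D_{\mathrm{TR}}D_{\mathrm{BL}}$ contains a genuine subtraction, and induction on order gives it no sign. ``Arrange it in a subtraction-free form'' needs a reason why one cross term vanishes.

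Let $\alpha_j,\beta_j$ be the ranks of $s_j,u_j$ in the merged knot list. A seam minor with columns $L_a,\dots,L_m,R_c,\dots,R_h$ and rows $\rho,\dots,\rho+q-1$ is supported exactly when $L:=m+\beta_h-q\le\rho\le\alpha_a=:U$; this follows by uncrossing the prefix and suffix supports. For $q\ge m$ one has $U-L\le1$, for three reasons:
\begin{enumerate}
\item $q\ge m$ forces $h\ge a-1$;
\item $\beta_j-j$ is nondecreasing;
\item $\alpha_a\le\beta_{a-1}+1$, since if $u_{a-1}<s_a$ then $s_{a-1}<u_{a-1}<s_a<u_a$, so $s_a$ is the knot immediately after $u_{a-1}$.
\end{enumerate}
At $\rho=L$, the minor $D_{\mathrm{TR}}$ (first column and last row deleted) is unsupported. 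At $\rho=U$, the minor $D_{\mathrm{BL}}$ (first row and last column deleted) is unsupported. Since every admissible $\rho$ is one of these endpoints, $D=D_{\mathrm{TL}}D_{\mathrm{BR}}/D_{\mathrm{mid}}>0$ by induction.

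The base of that induction is not ``the solid minors'' but orders $q\le m-1=d+1$. There the $q$ consecutive polynomials $\prod_{h=1}^{d}(y-T_{\rho+v+h})$ share $d-q+1$ roots, and their quotients form a basis of polynomials of degree $\le q-1$; this is what produces the Vandermonde. One-sided minors of order $m$ reduce by expansion along $L_1$ or $R_m$.

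\textbf{Where the pairing enters.} This lemma is precisely where $s_j<u_j$ is used. Contrary to your remark, the monotone staircase shape of the supports holds for any two increasing lists, so your plan never uses the pairing in a load-bearing way. Yet the pairing cannot be dropped: for $m=3$, the choice $X=(0,1,2,3,5,7)$, $S=(2,3,5)$, $U=(1,4,6)$ gives $H_1=-2$.

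\textbf{Smaller points in the staircase step.} Dividing the Pl\"ucker relation by its order-$(k-1)$ factor requires that factor to be strictly positive. This holds only after you split off the case where a superdiagonal entry vanishes; monotone supports then force a zero upper-right rectangle and a block factorization. Also, $C$ can have empty rows $\alpha_m<i<m-1+\beta_1$ (for example whenever $s_m<u_1$). The transposed matrix loses the monotone-interval property there, so these rows must be handled as separators.

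\textbf{The case $m=2$.} Your treatment is only an expectation, but it is a finite check. The row-difference map $v\mapsto(v_1-v_2,v_2-v_3,v_3-v_4,v_4)$ reduces $H_0$ to $\det[e_{\ell_1},e_{\ell_2},e_4-e_{r_1-1},e_4-e_{r_2-1}]$, which equals $1$ for three index patterns and $0$ otherwise.
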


Sections~\ref{sec:marsden}--\ref{sec:staircase} prove
Proposition~\ref{prop:critical}.  Section~\ref{sec:volterra} then proves
Theorem~\ref{thm:split}.  The degree $m-2$ is sharp for a uniform assertion;
explicit lower-degree counterexamples are recorded in
Remark~\ref{rem:sharpness}.

\section{Marsden common refinement}\label{sec:marsden}

In this section assume first that $m\geq3$ and that all knots in $S\cup U$
are distinct.  Put
\begin{equation}\label{eq:d}
 d=m-2.
\end{equation}
Merge the two lists as
\[
 z_0<z_1<\cdots<z_{2m-1},
\]
and let $\alpha_j$ and $\beta_j$ be the zero-based ranks of $s_j$ and
$u_j$.  The paired inequalities imply the Dyck conditions
\begin{equation}\label{eq:dyck-ranks}
 \alpha_j<\beta_j,\qquad \alpha_1=0,\qquad \beta_m=2m-1.
\end{equation}

Choose exterior anchors
\begin{equation}\label{eq:anchors}
 A<\min(X,S,U),\qquad B>\max(X,S,U),
\end{equation}
and form the open knot vector
\begin{equation}\label{eq:knot-vector}
 T=(A^{d+1},z_0,\ldots,z_{2m-1},B^{d+1}).
\end{equation}
We index its entries from zero.  Let $B_{i,d}$, $0\leq i\leq3m-2$, be
the normalized degree-$d$ B-splines on $T$, and let
\begin{equation}\label{eq:BT}
 \cB_T(X)=\bigl(B_{j,d}(x_i)\bigr)_
 {\substack{1\leq i\leq2m\\0\leq j\leq3m-2}}
\end{equation}
be their collocation matrix.  Normalized Marsden reproduction
\cite{Marsden1970} gives the exact factorization
\begin{equation}\label{eq:marsden-factor}
 \cM_d(X;S,U)=\cB_T(X)C,
\end{equation}
where $\cM_d$ is the split matrix in \eqref{eq:Hp}, and
\[
 C=[L_1,\ldots,L_m,R_1,\ldots,R_m]
\]
is a $(3m-1)\times2m$ coefficient matrix with
\begin{align}
 C_{i,L_j}
 &=\one_{i\leq\alpha_j}
   \prod_{h=1}^{d}(s_j-T_{i+h}),                         \label{eq:C-left}\\
 C_{i,R_j}
 &=\one_{i\geq m-1+\beta_j}
   \prod_{h=1}^{d}(T_{i+h}-u_j).                        \label{eq:C-right}
\end{align}
The normalization scalar in \eqref{eq:marsden-factor} is one.  For the
right block, the reflection
$(x-u_j)^d=(-1)^d(u_j-x)^d$ is exactly canceled by reversing every factor
in the Marsden coefficient; no residual sign remains.

Every displayed nonzero entry is positive.  In particular,
\begin{equation}\label{eq:C-supports}
 \supp L_j=[0,\alpha_j],\qquad
 \supp R_j=[m-1+\beta_j,3m-2].
\end{equation}
Both the left and right endpoints of these column intervals are
nondecreasing in the natural column order.  We shall prove
\begin{equation}\label{eq:C-TN-goal}
 C\in\TN.
\end{equation}

The ordinary B-spline collocation matrix \eqref{eq:BT} is totally
nonnegative \cite[Theorem~2]{deBoorDeVore1985}.  Hence
\eqref{eq:C-TN-goal}, \eqref{eq:marsden-factor}, and Cauchy--Binet imply
\eqref{eq:critical}.  Cross-family knot coincidences follow by continuity
because $d\geq1$.  The case $m=2,d=0$, where such a continuity argument is
unavailable, is Appendix~\ref{app:m2}.

\section{Supported solid minors of the coefficient matrix}
\label{sec:solid}

We index the rows of $C$ by $0,\ldots,3m-2$.  A solid minor uses consecutive
rows and consecutive columns.  One-sided solid minors lie wholly in the
$L$ or $R$ block.  A seam minor contains a suffix of the $L$ block and a
prefix of the $R$ block.  It is convenient to prove a slightly stronger
statement allowing an arbitrary consecutive right block.

Fix columns
\begin{equation}\label{eq:seam-columns}
 L_a,\ldots,L_m,R_c,\ldots,R_h,\qquad h=c+b-1,
\end{equation}
put
\begin{equation}\label{eq:seam-q}
 \ell=m-a+1,\qquad q=\ell+b,
\end{equation}
and take rows $\rho,\ldots,\rho+q-1$.  Denote the resulting determinant by
$D(a,c,b;\rho)$.

\begin{lemma}[Support interval]\label{lem:support-interval}
The determinant $D(a,c,b;\rho)$ has a supported permutation if and only if
\begin{equation}\label{eq:support-interval}
 m+\beta_h-q\leq\rho\leq\alpha_a.
\end{equation}
If this interval condition fails, the determinant is zero.
\end{lemma}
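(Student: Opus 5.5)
Rows are $\rho,\dots,\rho+q-1$ and columns are $L_a,\dots,L_m,R_c,\dots,R_h$. By \eqref{eq:C-supports}, $L_j$ is supported on the initial row interval $[0,\alpha_j]$ and $R_j$ on the terminal interval $[m-1+\beta_j,3m-2]$, and every displayed entry is positive. So a permutation is supported exactly when it gives a perfect matching between the $q$ rows and the $q$ columns in the bipartite graph whose edges are the nonzero entries. The plan is to decide when this matching exists with a Hall-type count that uses the nested structure of the supports. The second sentence of the lemma is then immediate: if no permutation is supported, every term of the Leibniz expansion vanishes.

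\emph{Necessity.} The left supports are nested, $[0,\alpha_a]\subseteq\cdots\subseteq[0,\alpha_m]$, because $\alpha_j$ is increasing. The right supports are nested in the opposite direction, $[m-1+\beta_c,3m-2]\supseteq\cdots\supseteq[m-1+\beta_h,3m-2]$.
\begin{itemize}
\item The column $L_a$ needs a row $\le\alpha_a$. The smallest available row is $\rho$, so $\rho\le\alpha_a$.
\item The column $R_h$ needs a row $\ge m-1+\beta_h$. The largest available row is $\rho+q-1$, so $\rho+q-1\ge m-1+\beta_h$, which is $\rho\ge m+\beta_h-q$.
\end{itemize}
Together these give \eqref{eq:support-interval}.

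\emph{Sufficiency.} Assume \eqref{eq:support-interval}. Use the explicit matching in which the rows are assigned in natural order:
\begin{itemize}
\item $L_{a+t}$ gets row $\rho+t$ for $t=0,\dots,\ell-1$;
\item $R_{c+t}$ gets row $\rho+\ell+t$ for $t=0,\dots,b-1$.
\end{itemize}

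For the left block we need $\rho+t\le\alpha_{a+t}$. The ranks $\alpha_j$ are strictly increasing integers, so $\alpha_{a+t}\ge\alpha_a+t\ge\rho+t$.

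For the right block we need $\rho+\ell+t\ge m-1+\beta_{c+t}$. Strict increase of the $\beta$'s gives $\beta_{c+t}\le\beta_h-(b-1-t)$. Hence
\[
m-1+\beta_{c+t}\le m-1+\beta_h-b+1+t=m+\beta_h-q+\ell+t\le\rho+\ell+t,
\]
using $q=\ell+b$ and the lower bound on $\rho$.

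So the identity-ordered permutation is supported, and that completes the equivalence.

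The main point to check carefully is the indexing. The right support starts at $m-1+\beta_j$: the offset $m-1$ comes from the $d+1=m-1$ copies of the anchor $A$ in \eqref{eq:knot-vector}. I would also note that the argument never uses the Dyck conditions, only strict monotonicity of $\alpha$ and $\beta$. In particular it covers seam minors for arbitrary $c$, which is the stronger statement the section wants.
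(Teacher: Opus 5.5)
Your proof is correct and follows the paper's own route: the same canonical matching (the $L$ columns to the first $\ell$ rows, the $R$ columns to the last $b$ rows), checked by strict monotonicity of the merge ranks, with the endpoint tests at $L_a$ and $R_h$. Your necessity argument is a little more explicit than the paper's sketch, because it looks directly at where the extreme columns $L_a$ and $R_h$ can be matched and so needs no separate uncrossing step.
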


\begin{proof}
Because the $L$ columns have prefix supports and the $R$ columns have suffix
supports, any supported matching can be uncrossed.  The canonical matching
pairs the $L$ columns with the first $\ell$ selected rows and the $R$
columns with the last $b$ rows.  Monotonicity of the merge ranks reduces
Hall's conditions for this matching to the last-right and first-left endpoint
tests in \eqref{eq:support-interval}.  If either fails, no determinant term
is supported.
\end{proof}

We first handle orders at most $m-1=d+1$.

\begin{lemma}[Small seam product]\label{lem:small-seam}
Assume $q\leq m-1$ and \eqref{eq:support-interval}.  For $0\leq v<q$ set
\begin{align}
 F_v(y)&=\prod_{h=1}^{d}(y-T_{\rho+v+h}),                \label{eq:Fv}\\
 G_{\rho,q}(y)&=\prod_{h=q}^{d}(y-T_{\rho+h}),          \label{eq:G}\\
 \phi_v(y)&=
 \prod_{h=v+1}^{q-1}(y-T_{\rho+h})
 \prod_{h=0}^{v-1}(y-T_{\rho+d+1+h}).                  \label{eq:phi}
\end{align}
Then $F_v=G_{\rho,q}\phi_v$, and
\begin{equation}\label{eq:phi-evaluation}
 \det[\phi_v(y_j)]_{v,j=0}^{q-1}
 =K_{\rho,q}\prod_{i<j}(y_j-y_i),
\end{equation}
where
\begin{equation}\label{eq:K}
 K_{\rho,q}=
 \prod_{0\leq i<j\leq q-1}
 (T_{\rho+d+1+i}-T_{\rho+j})>0.
\end{equation}
Consequently, with
$\Delta(y_1,\ldots,y_k)=\prod_{i<j}(y_j-y_i)$,
\begin{align}
 D(a,c,b;\rho)
 &=K_{\rho,q}
   \prod_{j=a}^{m}G_{\rho,q}(s_j)
   \prod_{j=c}^{h}|G_{\rho,q}(u_j)|                    \notag\\
 &\quad\cdot\Delta(s_a,\ldots,s_m)
   \Delta(u_c,\ldots,u_h)
   \prod_{\substack{a\leq i\leq m\\c\leq j\leq h}}
   (s_i-u_j)>0.                                         \label{eq:small-product}
\end{align}
\end{lemma}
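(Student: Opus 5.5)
The plan is to treat the minor $D(a,c,b;\rho)$ as a generalized Vandermonde determinant in the variables $y_j$, with $y_j=s_j$ for the $L$ columns and $y_j=u_j$ for the $R$ columns. Each column becomes one function evaluated at the $q$ selected rows. The key observation is what the interval condition \eqref{eq:support-interval} does when $q\leq m-1$. It should force every selected row to lie in the support of every selected column: the rows $\rho,\ldots,\rho+q-1$ satisfy $\rho+q-1\leq\alpha_a\leq\alpha_j$ for $j\geq a$, and $\rho\geq m-1+\beta_h\geq m-1+\beta_j$ for the right columns. I would verify this directly from the Dyck ranks, using $q\leq m-1$ to shrink the window. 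Granting it, all the indicators in \eqref{eq:C-left}--\eqref{eq:C-right} equal one. The $(v,j)$ entry is then $F_v(s_j)$ for left columns and $\prod_h(T_{\rho+v+h}-u_j)=(-1)^dF_v(u_j)$ for right columns. So the minor is $(-1)^{db}\det[F_v(y_j)]$.

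Next I factor out the common part. The nodes $T_{\rho+v+1},\ldots,T_{\rho+v+d}$ all contain the block $T_{\rho+q},\ldots,T_{\rho+d}$ because $v\leq q-1$ and $v+d\geq d$. This block is nonempty or empty according as $q\leq d$ or $q=d+1$. The leftover factors are exactly $\phi_v$, so $F_v=G_{\rho,q}\phi_v$. Pulling $G_{\rho,q}(y_j)$ out of each column reduces the problem to $\det[\phi_v(y_j)]$.

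Each $\phi_v$ is a monic polynomial of degree $q-1$, and the $q$ polynomials are in a triangular relation with a Newton-type basis. The cleanest route is the standard one: $\det[\phi_v(y_j)]$ is a polynomial of degree at most $q-1$ in each $y_j$ and vanishes when $y_i=y_j$. Hence it equals $\det[c_{v,k}]\cdot\Delta(y)$, where $c_{v,k}$ are the coefficients of $\phi_v$ in the monomial basis. Computing $\det[c_{v,k}]$ is the one genuinely computational step, and I expect it to be the main obstacle. I would evaluate it by induction on $q$. Subtract consecutive rows: $\phi_{v+1}-\phi_v$ factors as $(T_{\rho+v+1}-T_{\rho+d+1+v})$ times a common product. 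This recursion peels off one factor $T_{\rho+d+1+i}-T_{\rho+j}$ per pair, giving $K_{\rho,q}$ up to a sign that must be tracked carefully. Positivity of $K_{\rho,q}$ follows because $d+1+i>j$ for $i\geq0$ and $j\leq q-1\leq d$, and the knot vector is strictly increasing in the relevant range, since the distinct-knot hypothesis keeps anchors out of the window by the support condition.

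Finally I assemble the signs. Split $\Delta(y)$ over the ordered list $(s_a,\ldots,s_m,u_c,\ldots,u_h)$ into $\Delta(s)\Delta(u)\prod(u_j-s_i)$, and combine with $(-1)^{db}$ and the signs of $G_{\rho,q}(u_j)$. Every $u_j$ exceeds the knots $T_{\rho+q},\ldots,T_{\rho+d}$, because those rows lie below $m-1+\beta_j$. Also every $s_j$ lies below them. Hence $G(s_j)$ has sign $(-1)^{d-q+1}$ and $G(u_j)>0$. Each factor $u_j-s_i$ contributes a sign flip to reach $(s_i-u_j)$. I would then check that all these parities, namely $(-1)^{db}$, $(-1)^{\ell(d-q+1)}$, the $\ell b$ flips, and the sign from the $\det[c_{v,k}]$ computation, cancel exactly to give \eqref{eq:small-product}. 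Its right side is positive because each $s_i<u_j$ term is negative, which would be compensated. The sign cancellation is delicate, so I would first verify it numerically at $q=2$, $d=1$ before writing the general parity argument.
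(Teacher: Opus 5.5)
Your overall shape (reduce to $\det[F_v(y_j)]$, pull out $G_{\rho,q}$, evaluate a Vandermonde-type determinant) matches the paper. Your coefficient-matrix route to \eqref{eq:phi-evaluation} is a valid alternative to the paper's evaluation at the early roots. Indeed $\phi_{v+1}-\phi_v=(T_{\rho+v+1}-T_{\rho+d+1+v})P_v$, and the $P_v$ form a family of the same shape with $q-1$ in place of $q$ (early roots shifted by one). The sign $(-1)^{q-1}$ from expanding along the leading coefficient of $\phi_0$ cancels the sign from reversing the $q-1$ differences, so the constant is exactly $K_{\rho,q}$.

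The gap is in the geometric input. First, \eqref{eq:support-interval} gives only $m+\beta_h-q\le\rho\le\alpha_a$. It does not give $\rho+q-1\le\alpha_a$ or $\rho\ge m-1+\beta_h$, so the masks are not all one. For example, take $m=3$, $\alpha=(0,2,4)$, $\beta=(1,3,5)$ and columns $L_3,R_1$. The support interval is $2\le\rho\le4$; at $\rho=4$ the $L_3$ entry in row $5$ is masked, and at $\rho=2$ the $R_1$ entry in row $2$ is masked. What is true, and what you need, is that every masked selected entry is already a zero of the full Marsden product. A selected row $i$ in a column $L_j$ has $i\le\alpha_a+q-1\le\alpha_j+d$, and for $\alpha_j<i\le\alpha_j+d$ the product contains the factor $s_j-T_{d+1+\alpha_j}=0$. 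Symmetrically, a selected row in a column $R_j$ has $i\ge\rho\ge\beta_h+1\ge\beta_j+1$, and for $i<d+1+\beta_j$ the product contains $T_{d+1+\beta_j}-u_j=0$.

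Second, and fatally for the positivity claim, your positions are reversed. Since $q\le m-1$, \eqref{eq:support-interval} forces $\alpha_a\ge\beta_h+m-q\ge\beta_h+1$. Hence $u_c<\cdots<u_h<s_a<\cdots<s_m$, and every factor $s_i-u_j$ is positive, not negative. Next, $\rho+d\le\alpha_a+d<d+1+\alpha_a$ puts all the common roots $T_{\rho+q},\dots,T_{\rho+d}$ below $s_a$, so $G_{\rho,q}(s_j)>0$. Meanwhile $\rho+q\ge m+\beta_h=d+2+\beta_h$ puts them all above $u_h$, so $G_{\rho,q}(u_j)$ has sign $(-1)^{d-q+1}$.

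With your assignments ($G_{\rho,q}(s_j)$ of sign $(-1)^{d-q+1}$, $G_{\rho,q}(u_j)>0$, $s_i<u_j$), the right side of \eqref{eq:small-product} would have sign $(-1)^{\ell(d-q+1)+\ell b}$. That is already $-1$ for $m=3$, $\ell=b=1$. So the deferred parity check cannot close, and ``would be compensated'' hides a false step.

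With the correct picture the ledger does close. The factors are $(-1)^{db}$ from reflecting the right columns, $(-1)^{b(d-q+1)}$ from $G_{\rho,q}(u_j)$, and $(-1)^{\ell b}$ from splitting $\Delta$. The total exponent is $b(2d-q+1+\ell)\equiv b(b-1)\equiv 0\pmod 2$, and every factor in \eqref{eq:small-product} is then visibly positive.

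A minor point: anchors can occur among the early roots $T_{\rho+j}$ when $d$ is large, so they are not kept out of the window. Still $K_{\rho,q}>0$, because $T_{\rho+d+1+i}$ is never $A$ and $T_{\rho+j}$ is never $B$.
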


\begin{proof}
The common factors in \eqref{eq:Fv} are exactly
\eqref{eq:G}; the remaining roots give \eqref{eq:phi}.  Evaluating the
$\phi_v$ successively at the early roots
$T_{\rho+1},\ldots,T_{\rho+q-1}$ triangularizes the coefficient matrix and
gives \eqref{eq:K}.  Since both sides of \eqref{eq:phi-evaluation} are
alternating polynomials of the same degree, this proves
\eqref{eq:phi-evaluation}.

On the support interval, the masks in \eqref{eq:C-left}--\eqref{eq:C-right}
agree with the full sliding polynomials: a nominally masked evaluation is
already zero at one of the roots in \eqref{eq:Fv}.  The same condition also
orders the selected points as
\[
 u_c<\cdots<u_h<s_a<\cdots<s_m.
\]
Extracting the common factors and applying
\eqref{eq:phi-evaluation} gives \eqref{eq:small-product} up to sign.  Each
right column contributes $(-1)^d$; its common factor at $u_j$ has sign
$(-1)^{d-q+1}$, leaving $(-1)^{q-1}$ per right column.  Moving the $b$
right points before the $\ell$ left points contributes $(-1)^{b\ell}$.
The total exponent is
\[
 b(q-1)+b\ell=b(b-1)+2b\ell,
\]
which is even.
\end{proof}

The same calculation, with one point list absent, proves positivity for
every supported consecutive one-sided minor of order at most $m-1$.  The
only larger one-sided case has order $m$; expansion in $L_1$, respectively
$R_m$, reduces it with positive sign to order $m-1$.

We now treat all remaining seam minors.

\begin{lemma}[Width one]\label{lem:width-one}
Assume $q\geq m$.  The support interval in
\eqref{eq:support-interval}, written $[L,U]$, has width at most one:
\begin{equation}\label{eq:LU}
 L=m+\beta_h-q,\qquad U=\alpha_a,\qquad U-L\leq1.
\end{equation}
\end{lemma}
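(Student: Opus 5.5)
The plan is a direct counting argument on the merge ranks. No determinant computation is needed: \eqref{eq:LU} is a purely combinatorial statement about $\alpha_a$, $\beta_h$ and the sizes $\ell,b,q$. The endpoints $L=m+\beta_h-q$ and $U=\alpha_a$ are read off from Lemma~\ref{lem:support-interval}. Using $q=\ell+b=m-a+1+b$ from \eqref{eq:seam-q}, I will rewrite the width as
\[
 U-L=\alpha_a-\beta_h+q-m=\alpha_a-\beta_h+b-a+1 .
\]
So the claim $U-L\leq1$ is equivalent to $\beta_h-\alpha_a\geq b-a$. (If the interval is empty the width is negative and there is nothing to prove.)

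\textbf{Step 1: upper bound for $\alpha_a$.} I bound $\alpha_a$ using the pair condition \eqref{eq:pair-condition}. The zero-based rank of $s_a$ in the merged list counts $s_1,\ldots,s_{a-1}$ together with those $u_j$ satisfying $u_j<s_a$. If $j\geq a$, then $u_j>s_j\geq s_a$. Hence only $j\leq a-1$ can contribute, and
\[
 \alpha_a\leq (a-1)+(a-1)=2a-2 .
\]

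\textbf{Step 2: lower bound for $\beta_h$.} Symmetrically, the rank of $u_h$ counts $u_1,\ldots,u_{h-1}$ together with every $s_j<u_h$. For each $j\leq h$ we have $s_j<u_j\leq u_h$, so at least $h$ of the $s$'s precede $u_h$. This gives
\[
 \beta_h\geq 2h-1 .
\]
Since $h=c+b-1$ with $c\geq1$, it follows that $\beta_h\geq 2b-1$.

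\textbf{Step 3: combine.} Steps~1 and~2 give $\beta_h-\alpha_a\geq 2(b-a)+1$. It remains to check that $2(b-a)+1\geq b-a$, i.e.\ $b-a\geq-1$. This is exactly where the hypothesis $q\geq m$ enters: it reads $m-a+1+b\geq m$, i.e.\ $b\geq a-1$. Hence $U-L\leq1$.

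The only delicate point is making sure the rank conventions are right. The ranks are zero-based, and the $s$- and $u$-knots are distinct in this section, so strict and non-strict comparisons do not blur. Step~3 should also be checked as the place where the hypothesis $q\geq m$ is genuinely needed; this matches the fact that Lemma~\ref{lem:small-seam} handles $q\leq m-1$ separately. Everything else is elementary bookkeeping.
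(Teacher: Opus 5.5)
Your proof is correct. It is also more direct than the paper's. The paper does not bound $\alpha_a$ and $\beta_h$ in absolute terms; it compares both to the intermediate rank $\beta_{a-1}$:
\begin{itemize}
\item it proves the Dyck-path inequality $\alpha_a\le\beta_{a-1}+1$, splitting on whether $u_{a-1}$ precedes $s_a$ and using that the merge path returns to height zero just after $u_{a-1}$;
\item it uses $h\ge a-1$ and the monotonicity of $\beta_j-j$ to get $L\ge\beta_{a-1}+c-1$, hence $U-L\le 2-c$;
\item it treats $a=1$ as a separate case.
\end{itemize}

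Your bounds $\alpha_a\le 2a-2$ and $\beta_h\ge 2h-1$ come straight from the pair condition $s_j<u_j$. They need no case split and no merge-path reasoning. For $a\ge2$ one always has $\beta_{a-1}+1\ge 2a-2$, so your upper bound on $U$ is at least as strong as the paper's. If you keep $c$ rather than discarding it, your estimate becomes $U-L\le a-b-2c+2\le 3-2c$. This shows the support interval is actually empty whenever $c\ge2$, which the paper's bound $2-c$ does not reveal.

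The paper's route has a different advantage: it keeps the argument in the merge-path language already used for the supports in Lemma~\ref{lem:support-interval}. It also locates the extremal configuration explicitly, namely $u_{a-1}$ immediately preceding $s_a$.
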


\begin{proof}
The inequality $q=m-a+1+b\geq m$ gives $b\geq a-1$, hence
$h=c+b-1\geq a-1$.  The sequence $\beta_j-j$ is nondecreasing.  If
$a\geq2$, the Dyck property gives
\begin{equation}\label{eq:alpha-beta-adj}
 \alpha_a\leq\beta_{a-1}+1.
\end{equation}
Indeed, if $u_{a-1}$ occurs after $s_a$, this is immediate; if it occurs
before $s_a$, the merge path is at height zero just after $u_{a-1}$, so its
next event must be $s_a$.  Moreover,
\begin{align*}
 L&=a-1+\beta_h-b
   =a+c-2+(\beta_h-h)\\
  &\geq a+c-2+\beta_{a-1}-(a-1)
   =\beta_{a-1}+c-1\geq\beta_{a-1}.
\end{align*}
Together with \eqref{eq:alpha-beta-adj}, this proves $U-L\leq1$.  If
$a=1$, then $U=\alpha_1=0$ and $L=\beta_h-b\geq h-b=c-1\geq0$.
\end{proof}

\begin{lemma}[Large seam condensation]\label{lem:large-seam}
Every supported seam minor of order $q\geq m$ is strictly positive.
\end{lemma}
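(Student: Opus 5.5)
We prove Lemma~\ref{lem:large-seam} by induction on the order $q$, starting from the base cases supplied by Lemma~\ref{lem:small-seam}: every supported seam or one-sided solid minor of order at most $m-1$ is strictly positive. The inductive step uses Desnanot--Jacobi (Dodgson) condensation. For a supported seam minor $D=D(a,c,b;\rho)$ of order $q\geq m$ we write
\[
 D\cdot D^{\circ}=D^{\mathrm{NW}}D^{\mathrm{SE}}-D^{\mathrm{NE}}D^{\mathrm{SW}},
\]
where $D^{\circ}$ deletes the first and last rows and the first and last columns ($L_a$ and $R_h$). The four corner minors delete one boundary row and one boundary column. All five are again solid minors of seam or one-sided type, of order $q-1$ or $q-2$, so their signs are known by induction.

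The key step is to arrange the condensation to be subtraction-free. Here we use Lemma~\ref{lem:width-one}: the support interval $[L,U]$ has width at most one. We check, via Lemma~\ref{lem:support-interval}, that one of the two cross terms always has a factor lying outside its own support interval, and so vanishes. Deleting the first row (row $\rho$) and the last column $R_h$ turns the interval condition into $m+\beta_{h-1}-(q-1)\leq\rho+1\leq\alpha_a$. If $\rho=U=\alpha_a$, this fails, so the corresponding corner minor is zero. Symmetrically, if $\rho=L$, deleting the last row and the first column $L_a$ gives a minor whose lower test $m+\beta_h-(q-1)\leq\rho$ fails. Because $U-L\leq1$, every supported $\rho$ equals $L$ or $U$. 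In either case the identity reduces to $D\cdot D^{\circ}=(\text{product of two corner minors})$, with the surviving pair being the one whose row shifts stay inside the shifted intervals.

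We then verify that $D^{\circ}$ and the two surviving corner minors are supported, meaning their rows satisfy \eqref{eq:support-interval} with the updated $a$, $h$, $q$. Induction then makes them strictly positive, and so $D=(\text{positive})/D^{\circ}>0$. This verification uses the monotonicity of $\alpha_j$ and $\beta_j-j$ and the Dyck inequality \eqref{eq:alpha-beta-adj}. The boundary situations need separate handling: $a=m$ or $b=1$, where a deleted column empties a block, and $a=1$, where $U=0$. When a corner degenerates into a one-sided minor of order up to $m$, its positivity comes from the remark following Lemma~\ref{lem:small-seam}, which covers one-sided orders up to $m$ by expanding in $L_1$ or $R_m$. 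When $q-2<m$, the minor $D^{\circ}$ is handled by Lemma~\ref{lem:small-seam}.

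The main obstacle is exactly this bookkeeping: showing that for each of the two possible positions $\rho\in\{L,U\}$ the vanishing cross term is the right one, and that the three remaining minors are all supported and not merely nonnegative. If the plain Dodgson identity on the first and last indices fails to isolate a single product in some configuration, the first fallback is to condense instead on adjacent row pairs, or on the seam columns $L_m$ and $R_c$. The aim is to choose the pivot pair so that the width-one structure forces one Pl\"ucker term to vanish.
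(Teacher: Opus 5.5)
Your proposal is correct and follows the paper's proof. You apply Desnanot--Jacobi to the first and last rows and columns and use Lemma~\ref{lem:width-one} to kill one cross term: the minor with the last row and first column deleted vanishes at $\rho=L$, and the one with the first row and last column deleted vanishes at $\rho=U$. This leaves $D=D_{\mathrm{TL}}D_{\mathrm{BR}}/D_{\mathrm{mid}}$, with the three survivors positive by induction from Lemma~\ref{lem:small-seam} and the order-$m$ one-sided remark. Your fallback is unnecessary: supportedness of $D_{\mathrm{TL}}$, $D_{\mathrm{BR}}$, $D_{\mathrm{mid}}$ for every $\rho\in[L,U]$ needs only strict increase of the ranks ($\beta_{h-1}\leq\beta_h-1$, $\alpha_{a+1}\geq\alpha_a+1$), and the Dyck inequality enters only through Lemma~\ref{lem:width-one}.
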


\begin{proof}
Apply Desnanot--Jacobi to $D=D(a,c,b;\rho)$, deleting the first and last
rows and columns.  We extend the notation by letting $D(a,c,0;\rho)$
denote the corresponding left-only solid minor and
$D(m+1,c,b;\rho)$ the corresponding right-only solid minor; the empty
determinant is $1$.  Then
\begin{equation}\label{eq:DJ}
 D D_{\mathrm{mid}}
 =D_{\mathrm{TL}}D_{\mathrm{BR}}
  -D_{\mathrm{TR}}D_{\mathrm{BL}},
\end{equation}
where
\begin{align*}
 D_{\mathrm{TL}}&=D(a,c,b-1;\rho),&
 D_{\mathrm{BR}}&=D(a+1,c,b;\rho+1),\\
 D_{\mathrm{TR}}&=D(a+1,c,b;\rho),&
 D_{\mathrm{BL}}&=D(a,c,b-1;\rho+1),\\
 D_{\mathrm{mid}}&=D(a+1,c,b-1;\rho+1).
\end{align*}
For every $\rho\in[L,U]$, the three minors
$D_{\mathrm{TL}},D_{\mathrm{BR}},D_{\mathrm{mid}}$ satisfy their support
conditions and are positive by lower-order induction.  If $\rho=L$, then
$D_{\mathrm{TR}}=0$; if $\rho=U=L+1$, then
$D_{\mathrm{BL}}=0$; if $L=U$, both statements apply.  Lemma
\ref{lem:width-one} leaves no other value.  Thus one cross product in
\eqref{eq:DJ} vanishes and
\begin{equation}\label{eq:DJ-positive}
 D=\frac{D_{\mathrm{TL}}D_{\mathrm{BR}}}
          {D_{\mathrm{mid}}}>0.
\end{equation}
The induction starts at $q=m$, where all three surviving minors have order
at most $m-1$ and are covered by Lemma~\ref{lem:small-seam} or its one-sided
boundary.  If $b=1$ or $\ell=1$, the corresponding neighbor is one-sided;
the order-zero middle determinant is interpreted as $1$.
\end{proof}

Combining the preceding lemmas yields the local statement needed later.

\begin{proposition}[Solid-minor theorem]\label{prop:solid}
Every solid minor of $C$ is nonnegative.  It is positive exactly when its
sorted diagonal lies in the interval supports \eqref{eq:C-supports}.
\end{proposition}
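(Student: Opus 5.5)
The proof is a case assembly of Lemmas~\ref{lem:support-interval}--\ref{lem:large-seam}. Every solid minor of $C$ uses consecutive columns of the ordered list $L_1,\ldots,L_m,R_1,\ldots,R_m$. So it is either a one-sided minor, lying wholly in the $L$ block or wholly in the $R$ block, or a seam minor $D(a,c,b;\rho)$ with $c=1$. The statement that positivity holds exactly when the sorted diagonal lies in the supports \eqref{eq:C-supports} has two halves. If the diagonal leaves the supports, the minor vanishes. If it stays inside, the minor is strictly positive. Nonnegativity of every solid minor follows from these two halves together.

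\emph{Zero half.} Because the supports are prefix intervals for $L$ and suffix intervals for $R$, with endpoints nondecreasing in the column order, the uncrossing argument of Lemma~\ref{lem:support-interval} shows that a supported permutation exists if and only if the canonical diagonal matching is supported. For a seam minor this is exactly condition \eqref{eq:support-interval}. For a one-sided minor it is the analogous endpoint condition: $\rho+\ell-1\le\alpha_{m'}$ paired appropriately in the $L$ block, or $\rho\ge m-1+\beta_c$ in the $R$ block. I will check that this canonical matching is precisely the sorted diagonal. When the condition fails, no term of the Leibniz expansion is supported, so the determinant is $0$.

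\emph{Positive half.} Suppose the diagonal is supported. Seam minors of order $q\le m-1$ are positive by the product formula \eqref{eq:small-product} in Lemma~\ref{lem:small-seam}. One-sided minors of order at most $m-1$ are positive by the same computation with one point list absent, as remarked after that lemma. One-sided minors of order $m$ reduce to order $m-1$ by expansion along $L_1$ or $R_m$. Seam minors of order $q\ge m$ are positive by Lemma~\ref{lem:large-seam}, which is the induction on $q$ via \eqref{eq:DJ-positive}. Solid minors with nonconsecutive block structure do not occur, since columns are consecutive. A general seam minor need not contain $L_m$, but every consecutive column set crossing the seam ends at $L_m$ on the left side and begins at $R_1$ on the right. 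It is therefore of the form $D(a,1,b;\rho)$, which is covered.

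\emph{Main obstacle.} The delicate point is the one-sided boundary cases: one-sided minors of order $m$, and the one-sided neighbours that Lemma~\ref{lem:large-seam} invokes when $b=1$ or $\ell=1$. There I must confirm two things. First, expanding along $L_1$ uses that $\supp L_1=[0,0]$, so the only nonzero entry of $L_1$ is at row $0$. This forces $\rho=0$ and leaves a positive cofactor with correct sign $+1$. Second, the remaining minor again satisfies its support condition. I would verify the symmetric statement for $R_m$ using $\supp R_m=[3m-2,3m-2]$, since $\beta_m=2m-1$. The degenerate cases where supports force $\rho$ outside the matrix are handled by the zero half.
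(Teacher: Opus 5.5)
Your proposal is essentially the paper's own proof, which consists of combining Lemmas~\ref{lem:support-interval}--\ref{lem:large-seam} with the remark on one-sided minors, and your check of the order-$m$ one-sided cases via $\supp L_1=[0,0]$ and $\supp R_m=[3m-2,3m-2]$ is correct. One slip: the explicit one-sided endpoint tests you wrote are the non-binding ends, and they are necessary but not sufficient. For columns $L_a,\ldots,L_{a+k-1}$ on rows $\rho,\ldots,\rho+k-1$ the binding test is $\rho\le\alpha_a$, and for $R_c,\ldots,R_{c+k-1}$ it is $\rho+k-1\ge m-1+\beta_{c+k-1}$, mirroring the seam condition \eqref{eq:support-interval}; your argument is unaffected because it runs through the sorted-diagonal criterion.
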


\section{From solid minors to total nonnegativity}\label{sec:staircase}

We isolate the recognition principle used for the rectangular matrix $C$.
The dispersion induction, three-term Pl\"ucker identity, and zero-shadow
factorization are established mechanisms in the ASTP/ASSR literature; see
Gasca--Micchelli--Pe\~na
\cite[Definition~3.1 and Theorem~3.1]{GascaMicchelliPena1992}, Gladwell
\cite[Theorem~2.1]{Gladwell2004}, and Alonso--Pe\~na--Serrano
\cite[Theorem~1]{AlonsoPenaSerrano2022}.  We prove the precise form required
here because the first theorem assumes total nonnegativity, while the latter
two impose square/nonsingular or maximal-rank supported-diagonal hypotheses
that the raw coefficient matrix need not satisfy.

\begin{theorem}[Monotone-interval staircase recognition]
\label{thm:staircase}
Let $M$ be a real rectangular matrix whose $j$th column is strictly positive
exactly on an integer interval $[\lambda_j,\rho_j]$.  Assume both endpoint
sequences are nondecreasing.  If every solid submatrix whose sorted diagonal
is supported has positive determinant, then every minor of $M$ is
nonnegative.  More precisely, a minor is positive exactly when its sorted
diagonal is supported.  Maximal blocks of empty rows are allowed and act as
separators.
\end{theorem}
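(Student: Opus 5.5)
We argue by induction on the order $k$ of the minor, and within a fixed order on its \emph{dispersion}: the total number of gaps in the chosen row set plus the number of gaps in the chosen column set. Write a general minor as $M[I,J]$ with $I=(i_1<\cdots<i_k)$ and $J=(j_1<\cdots<j_k)$. Its sorted diagonal is \emph{supported} if $i_t\in[\lambda_{j_t},\rho_{j_t}]$ for every $t$.

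The first step is a zero-shadow lemma. If the sorted diagonal is unsupported, then $\det M[I,J]=0$. Suppose $i_t<\lambda_{j_t}$ for some $t$. By monotonicity of $\lambda$, the columns $j_t,\ldots,j_k$ all vanish on rows $i_1,\ldots,i_t$. That gives a $t\times(k-t+1)$ zero block, and the Frobenius--König theorem forces the determinant to vanish. The case $i_t>\rho_{j_t}$ is symmetric, using monotonicity of $\rho$ and the columns $j_1,\ldots,j_t$. This also shows that empty rows can only appear in minors with unsupported diagonal. Hence maximal empty row blocks separate $M$ into independent staircase pieces, and we may delete them.

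It therefore suffices to show: if the sorted diagonal of $M[I,J]$ is supported, then $\det M[I,J]>0$. Dispersion zero is exactly the hypothesis on solid minors. For positive dispersion, suppose for instance that $J$ has a gap, i.e.\ some column $j$ with $j_1<j<j_k$ and $j\notin J$. We apply the three-term Plücker (Sylvester-type) identity to the $k\times(k+1)$ matrix $M[I,J\cup\{j\}]$. With $J'=J\cup\{j\}$, it reads
\[
\det M[I,J'\setminus\{j_1\}]\det M[I\setminus\{i\},\ldots]
\]
More concretely, we use the standard relation among $k$-minors of a $k\times(k+1)$ matrix bordered by a row $i$. After bordering by an appropriate row, this expresses $\det M[I,J]$ times a minor of order $k-1$ as a sum of two products of minors. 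All the minors on the right either have strictly smaller dispersion or smaller order. Gaps in $I$ are handled the same way by transposition; the hypotheses are symmetric, since row supports are also intervals with monotone endpoints, which follows from the monotonicity of the column endpoints. By induction, every term on the right is nonnegative. The multiplier of lower order is positive whenever its diagonal is supported.

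The main obstacle is choosing the inserted column $j$ (or row $i$) and the bordering index so that three conditions hold. First, the order-$(k-1)$ multiplier has supported diagonal, and is therefore strictly positive, so we may divide by it. Second, at least one product on the right has both factors with supported diagonals, which gives strict positivity. Third, the formula appears in subtraction-free form. I would choose $j$ adjacent to an existing gap endpoint and use the interval structure: since $\lambda_j\le\lambda_{j_{t+1}}$ and $\rho_j\ge\rho_{j_t}$ for $j_t<j<j_{t+1}$, the diagonal entry $i_t$ or $i_{t+1}$ remains supported when column $j$ replaces a neighbor. The sign of the Plücker expansion then works out so that $\det M[I,J]$ equals a sum of nonnegative products over a positive divisor. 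If only $j_1$ or $j_k$ admits a supported replacement, I would instead delete the extremal column and apply a Desnanot--Jacobi step like the one in Lemma~\ref{lem:large-seam}. In that step, the unsupported cross term vanishes by the zero-shadow lemma, leaving a single positive quotient. Verifying that one of these two configurations always occurs is the combinatorial core. I expect it to use the same uncrossing-of-matchings argument as Lemma~\ref{lem:support-interval}.
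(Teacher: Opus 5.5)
Your outer framework matches the paper and is sound: the zero-shadow lemma via Frobenius--K\"onig, induction on order and then dispersion, transposition for row gaps, and empty row blocks as separators. The gap is the inductive step itself. Your displayed Pl\"ucker formula is truncated, and you never fix which row is deleted or which column is inserted. You also explicitly leave unverified the ``combinatorial core'': that the divisor and one product on the right have supported diagonals. That verification is the entire content of the theorem. It cannot be completed from supportedness of the diagonal of $M[I,J]$ alone, because a case split is missing.

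Concretely, take $\tau=J\setminus\{j_1,j_k\}$, any missing column $p$ with $j_s<p<j_{s+1}$, and $w=I\setminus\{i_k\}$. With columns in increasing order,
\[
 \Delta(w,\tau\cup\{p\})\,\Delta(I,J)
 =\Delta(w,\tau\cup\{j_k\})\,\Delta(I,\tau\cup\{j_1,p\})
 +\Delta(w,\tau\cup\{j_1\})\,\Delta(I,\tau\cup\{p,j_k\}).
\]
The sorted diagonals of the divisor $\Delta(w,\tau\cup\{p\})$ and of $\Delta(I,\tau\cup\{p,j_k\})$ contain the shifted pairs $(i_t,j_{t+1})$ for $t<s$, and the pair $(i_s,p)$. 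Your observation $\lambda_p\leq\lambda_{j_{s+1}}$, $\rho_p\geq\rho_{j_s}$ settles $(i_s,p)$ only once $\lambda_{j_{s+1}}\leq i_s$ is known. Nothing forces the superdiagonal entries $M[i_t,j_{t+1}]$ to be positive. If $M[i_t,j_{t+1}]=0$ for some $t<s$, the divisor has unsupported diagonal and vanishes by your own zero-shadow lemma, so there is nothing to divide by.

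The missing idea is a dichotomy:
\begin{itemize}
\item If some $M[i_t,j_{t+1}]=0$, then $i_t<\lambda_{j_{t+1}}$. By monotonicity of $\lambda$, the columns $j_{t+1},\ldots,j_k$ vanish on rows $i_1,\ldots,i_t$. So $\det M[I,J]$ is block triangular and equals the product of $\det M[\{i_1,\ldots,i_t\},\{j_1,\ldots,j_t\}]$ and $\det M[\{i_{t+1},\ldots,i_k\},\{j_{t+1},\ldots,j_k\}]$. Both factors have smaller order and supported diagonals, so induction applies.
\item Otherwise all superdiagonal entries are positive. The three indicated minors are then supported, the second product is strictly positive, and induction finishes.
\end{itemize}

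Your Desnanot--Jacobi fallback does not substitute for this. The vanishing cross term in Lemma~\ref{lem:large-seam} came from the width-one property of Lemma~\ref{lem:width-one}, which is special to $C$. For a general monotone-interval matrix both cross minors can be positive, so that identity is not subtraction-free.
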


\begin{proof}
First, a supported matching can be uncrossed.  If two increasing columns are
matched to rows in the opposite order, monotonicity of both interval
endpoints permits the two matches to be exchanged.  Hence a selected
submatrix has a supported determinant term if and only if its sorted
diagonal is supported.  If the sorted diagonal is unsupported, every term is
zero.

For a supported minor, write
\[
 \Delta(I,J)=\det M[I,J],
\]
and induct simultaneously on its order and on the sum of its row and column
dispersions.  Solid minors are the base.  Suppose first that
$J=\{j_1\}\cup\tau\cup\{j_k\}$ has a gap, choose a missing column
$p\in(j_1,j_k)$, and let $w$ be $I$ with its last row removed.  If a
superdiagonal entry in $M[I,J]$ vanishes, monotone interval support forces a
whole upper-right zero rectangle, so the determinant factors, with positive
sign, into two smaller supported diagonal blocks.

Otherwise the three-term Pl\"ucker identity is
\begin{align}
 &\Delta(w,\tau\cup\{p\})
  \Delta(I,\tau\cup\{j_1,j_k\})                         \notag\\
 &=\Delta(w,\tau\cup\{j_k\})
   \Delta(I,\tau\cup\{j_1,p\})                         \notag\\
 &\quad+
   \Delta(w,\tau\cup\{j_1\})
   \Delta(I,\tau\cup\{p,j_k\}).                      \label{eq:plucker}
\end{align}
The sorted diagonals of
\[
 \Delta(w,\tau\cup\{p\}),\qquad
 \Delta(w,\tau\cup\{j_1\}),\qquad
 \Delta(I,\tau\cup\{p,j_k\})
\]
are supported.  The first is therefore a positive order-$(k-1)$ divisor.
The product of the latter two factors is strictly positive by induction,
whereas the other product on the right of \eqref{eq:plucker} is
nonnegative.  Both order-$k$ minors on the right have smaller column
dispersion.  Dividing by the first factor proves that the target minor is
positive.  Notice that the selected row set was arbitrary, so simultaneous
row and column gaps have already been handled.

It remains only the case in which the columns are consecutive and the rows
have a gap.  For each row of $M$, the columns containing that row form an
interval whose endpoints move weakly forward with the row index.  Away from
empty rows, the transpose therefore has the same monotone-interval property,
and the preceding argument reduces row dispersion.

Finally, an empty row block determines a column cut: columns supported above
the block precede those supported below it.  A supported selected minor
spanning the block is block diagonal at this cut and factors without a
permutation sign; a minor selecting an empty row is zero.  This proves the
separator assertion and completes the induction.
\end{proof}

\begin{corollary}\label{cor:C-TN}
The Marsden coefficient matrix $C$ in
\eqref{eq:C-left}--\eqref{eq:C-right} is totally nonnegative.
\end{corollary}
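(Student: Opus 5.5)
The plan is to apply Theorem~\ref{thm:staircase} to $M=C$. Three hypotheses must be checked: interval supports, monotone endpoints, and positivity of the supported solid minors. The work has already been done in the preceding sections, so the proof is mainly an assembly step. We stay in the setting of Section~\ref{sec:marsden}, where $m\geq3$ and the knots in $S\cup U$ are distinct.

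\emph{Interval supports and monotone endpoints.} By \eqref{eq:C-left}--\eqref{eq:C-right}, every displayed nonzero entry is a product of factors $s_j-T_{i+h}$ or $T_{i+h}-u_j$. Inside the mask these factors are positive, because the knot ranks guarantee that $T_{i+h}$ lies on the correct side of $s_j$ (resp.\ $u_j$). So each column is strictly positive exactly on the interval in \eqref{eq:C-supports}, with
\[
 \lambda_{L_j}=0,\quad \rho_{L_j}=\alpha_j,\qquad \lambda_{R_j}=m-1+\beta_j,\quad \rho_{R_j}=3m-2.
\]
In the natural column order $L_1,\ldots,L_m,R_1,\ldots,R_m$, the left endpoints read $0,\ldots,0,m-1+\beta_1,\ldots$, which is nondecreasing since $\beta_j$ increases. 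The right endpoints read $\alpha_1,\ldots,\alpha_m,3m-2,\ldots,3m-2$, which is nondecreasing since $\alpha_j$ increases and $\alpha_m\leq 2m-1\leq 3m-2$.

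\emph{Solid minors.} Proposition~\ref{prop:solid} states that every solid minor whose sorted diagonal is supported is positive. It covers all solid minors:
\begin{itemize}
\item one-sided solid minors, via Lemma~\ref{lem:small-seam} and the remark following it;
\item seam minors of order at most $m-1$, via Lemma~\ref{lem:small-seam};
\item seam minors of order at least $m$, via Lemma~\ref{lem:large-seam}.
\end{itemize}
The one point needing care is that Lemma~\ref{lem:support-interval} phrases supportedness as the interval condition \eqref{eq:support-interval}, not as the sorted diagonal lying in the supports. These are the same condition: the canonical uncrossed matching in that proof is the sorted diagonal. Hence a solid minor has a supported sorted diagonal exactly when \eqref{eq:support-interval} holds, and it is then positive. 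Any solid minor whose columns are not of the seam shape \eqref{eq:seam-columns} is one-sided, so all cases are covered.

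\emph{Conclusion.} Theorem~\ref{thm:staircase} now gives $C\in\TN$. Empty rows of $C$, if any, are handled by the separator clause of that theorem. I expect the main obstacle to be bookkeeping rather than substance: confirming that the two supportedness notions coincide, and that no solid minor falls outside the cases treated in Section~\ref{sec:solid}.
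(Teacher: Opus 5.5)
Your proposal is correct and follows the paper's proof. Like the paper, you apply Theorem~\ref{thm:staircase} to $C$, take the nondecreasing endpoints from \eqref{eq:C-supports}, get the solid-minor hypothesis from Proposition~\ref{prop:solid}, and invoke the separator clause for empty rows. The paper also locates the only possible empty rows explicitly, as $\alpha_m<i<m-1+\beta_1$, with all $L$ supports above them and all $R$ supports below; this is automatic from the monotone intervals but worth stating.
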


\begin{proof}
The endpoints in \eqref{eq:C-supports} are nondecreasing and
Proposition~\ref{prop:solid} supplies the solid-minor hypothesis.  Its only
possible empty rows form the interval
\[
 \alpha_m<i<m-1+\beta_1,
\]
with only $L$ columns supported above and only $R$ columns supported below.
The separator clause of Theorem~\ref{thm:staircase} applies.
\end{proof}

\begin{proof}[Proof of Proposition~\ref{prop:critical} for $m\geq3$]
The matrix $\cB_T(X)$ is totally nonnegative, as is $C$ by
Corollary~\ref{cor:C-TN}.  The factorization
\eqref{eq:marsden-factor} and Cauchy--Binet give
$H_{m-2}(X;S,U)\geq0$ when $S\cup U$ is distinct.  Since $m-2\geq1$, the
general case follows by continuity.  The remaining case $m=2$ is proved in
Appendix~\ref{app:m2}.
\end{proof}

\section{Volterra propagation}\label{sec:volterra}

The critical inequality propagates to every higher degree through an exact
antisymmetrized convolution.

\begin{proposition}[Volterra recurrence]\label{prop:volterra}
For $p\geq1$,
\begin{align}
 H_p(X;S,U)
 &=p^{2m}
 \int_{\substack{a_1<\cdots<a_m\\ b_1<\cdots<b_m}}
 H_{p-1}(X;a,b)                                         \notag\\
 &\qquad\cdot
 \det[\one_{a_i<s_j}]_{i,j=1}^{m}
 \det[\one_{u_j<b_i}]_{i,j=1}^{m}\,da\,db.            \label{eq:volterra}
\end{align}
There is no additional factorial.  Each step determinant is either zero or
$+1$; where both are nonzero, $a_i<s_i<u_i<b_i$ for every $i$.
\end{proposition}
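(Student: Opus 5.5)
The identity is a column-by-column integral representation followed by the Cauchy--Binet (Andréief) formula for integrals of products of determinants. The one-variable identities come from $(y)_+^p=p\int_{-\infty}^{y}(t)_+^{p-1}\,dt$ for $p\geq1$. For the left columns,
\[
 (s_j-x_i)_+^p=p\int_{\R}(a-x_i)_+^{p-1}\one_{a<s_j}\,da ,
\]
since the integrand is supported on $x_i<a<s_j$ and integrates to $(s_j-x_i)^p/p$. For the right columns,
\[
 (x_i-u_j)_+^p=p\int_{\R}(x_i-b)_+^{p-1}\one_{u_j<b}\,db ,
\]
supported on $u_j<b<x_i$. Each of the $2m$ columns of the matrix in \eqref{eq:Hp} is thus an integral transform of a degree-$(p-1)$ column with kernel $p\,\one_{a<s_j}$ or $p\,\one_{u_j<b}$. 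Multilinearity of the determinant in columns then writes $H_p$ as $p^{2m}$ times an integral over $(a_1,\dots,a_m,b_1,\dots,b_m)\in\R^{2m}$ of
\[
 \det\bigl[(a_j-x_i)_+^{p-1}\mid(x_i-b_j)_+^{p-1}\bigr]\prod_j\one_{a_j<s_j}\prod_j\one_{u_j<b_j}.
\]
The bracketed determinant is $H_{p-1}(X;a,b)$ with unsorted arguments.

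Next I symmetrize. The determinant is alternating in $(a_1,\dots,a_m)$ and separately in $(b_1,\dots,b_m)$, because permuting these variables permutes the left or right block of columns. So I split $\R^m\times\R^m$ into the chambers given by the orderings of $a$ and of $b$, and change variables in each chamber to the sorted order. Summing the signs $\operatorname{sgn}\sigma\prod_j\one_{a_{\sigma(j)}<s_j}$ over $\sigma$ gives exactly $\det[\one_{a_i<s_j}]$, and likewise for $b$. The diagonals, where two $a$'s or two $b$'s coincide, have measure zero. This yields \eqref{eq:volterra} with constant $p^{2m}$ and no factorial, because each unordered configuration is counted once per ordering and the sum over orderings is absorbed into the step determinants rather than producing $m!$. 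Convergence is not an issue: $H_{p-1}(X;a,b)$ vanishes unless every $a_j>\min X$ and every $b_j<\max X$, and the indicators bound $a_j<s_j$ and $b_j>u_j$, so the integration domain is bounded.

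The remaining claims concern the step determinants. The matrix $[\one_{a_i<s_j}]$, with $a$ and $s$ both increasing, has a staircase form: row $i$ is $0$ up to some column and then $1$, and the switch point $k_i=\min\{j:s_j>a_i\}$ is nondecreasing in $i$. If two rows have the same switch point, they are equal and the determinant is $0$. Otherwise $k_i$ strictly increases, and since $k_1\geq1$ and $k_m\leq m$ this forces $k_i=i$. The matrix is then upper unitriangular, with determinant $+1$, and $k_i=i$ means exactly $s_{i-1}\leq a_i<s_i$. The right step matrix $[\one_{u_j<b_i}]$ has entries equal to $1$ for $j$ up to some index, so it is a lower staircase. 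The same argument gives determinant $+1$ exactly when $u_i<b_i\leq u_{i+1}$, and $0$ otherwise.

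Where both step determinants are nonzero we therefore have $a_i<s_i<u_i<b_i$, using \eqref{eq:pair-condition}. The only delicate point is the sign bookkeeping in the symmetrization: I must check that the same permutation sign arises from reordering the columns of $H_{p-1}$ and from expanding the step determinant. This holds because both are indexed by the column label $j$.
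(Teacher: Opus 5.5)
Your proposal is correct and follows essentially the same route as the paper. Both arguments use the scalar Volterra identities and column multilinearity, then split each labelled block into its $m!$ order chambers so that the signed chamber sum becomes $\det[\one_{a_i<s_j}]$ (resp.\ $\det[\one_{u_j<b_i}]$), and finish with the interlacing support of the step determinants. Your staircase argument makes explicit what the paper only asserts about those supports; it tacitly skips the zero-row case (e.g.\ $a_m\geq s_m$ or $b_1\leq u_1$), which is harmless because the determinant is then $0$.
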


\begin{proof}
The scalar identities
\begin{align}
 (s-x)_+^p
 &=p\int_{\R}(a-x)_+^{p-1}\one_{a<s}\,da,               \label{eq:volt-left}\\
 (x-u)_+^p
 &=p\int_{\R}(x-b)_+^{p-1}\one_{u<b}\,db               \label{eq:volt-right}
\end{align}
are immediate.  Wedge the $m$ left column integrals.  Partition the labelled
$a$-domain into its $m!$ order chambers and rename the variables increasingly
in each chamber.  The wedge permutation sign converts the chamber sum into
\[
 \sum_{\sigma\in S_m}\operatorname{sgn}(\sigma)
 \prod_{j=1}^{m}\one_{a_{\sigma(j)}<s_j}
 =\det[\one_{a_i<s_j}].
\]
Thus the single ordered chamber already contains the complete alternating
sum; it is not multiplied or divided by $m!$.  The right wedge gives the
second determinant and another factor $p^m$.  The left columns remain before
the right columns, so no cross-block sign occurs.  This proves
\eqref{eq:volterra}.

For increasing $a$ and $s$, the first step determinant equals $1$ precisely
on
\begin{equation}\label{eq:left-step-support}
 a_1<s_1<a_2<s_2<\cdots<a_m<s_m,
\end{equation}
up to boundary faces, and vanishes otherwise.  The second equals $1$
precisely on
\begin{equation}\label{eq:right-step-support}
 u_1<b_1<u_2<b_2<\cdots<u_m<b_m.
\end{equation}
Their common support implies $a_i<s_i<u_i<b_i$.
\end{proof}

\begin{proof}[Proof of Theorem~\ref{thm:split}]
Proposition~\ref{prop:critical} is the base $p=m-2$.  If
$H_{p-1}(X;a,b)\geq0$ for every paired $a_i<b_i$, then every factor in the
integrand of \eqref{eq:volterra} is nonnegative.  Hence $H_p\geq0$.  For
$m=2$, the first recurrence used is $p=1$, so no negative exponent is
introduced.
\end{proof}

\section{The Beta--de Bruijn bridge}\label{sec:debruijn}

Fix an integer $r\geq m-1$ and put $D=2r+1$.  For each $t\in\R$, define column
vectors in $\R^{2m}$ by
\begin{equation}\label{eq:LR-vectors}
 L_i(t)=(t-x_i)_+^r,\qquad R_i(t)=(x_i-t)_+^r,
 \qquad 1\leq i\leq2m.
\end{equation}

\begin{lemma}[Beta normalization]\label{lem:beta}
With
\begin{equation}\label{eq:cr}
 c_r=\frac{(2r+1)!}{(r!)^2},
\end{equation}
one has, for all $i,j$,
\begin{equation}\label{eq:beta}
 (x_j-x_i)^{2r+1}
 =c_r\int_{\R}
 \bigl(L_i(t)R_j(t)-L_j(t)R_i(t)\bigr)\,dt.
\end{equation}
\end{lemma}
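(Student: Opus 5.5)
The plan is to reduce \eqref{eq:beta} to a single Beta integral by a case split on the order of $x_i$ and $x_j$, using antisymmetry of both sides. Both sides of \eqref{eq:beta} change sign under $i\leftrightarrow j$: the left because $2r+1$ is odd, the right by inspection. If $i=j$, both sides vanish. So it suffices to treat the case $x_i<x_j$.

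Assume $x_i<x_j$. First I will determine where each product in the integrand is nonzero.
\begin{itemize}
\item $L_i(t)R_j(t)=(t-x_i)_+^r(x_j-t)_+^r$ is supported on $[x_i,x_j]$.
\item $L_j(t)R_i(t)=(t-x_j)_+^r(x_i-t)_+^r$ would require $t>x_j$ and $t<x_i$ simultaneously, which is impossible. So this product vanishes identically.
\end{itemize}
For $r=0$, recall the convention $y_+^0=\one_{y>0}$. The product is then the indicator of the open interval $(x_i,x_j)$, and the same conclusion holds up to a null set, which does not affect the integral.

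Next I substitute $t=x_i+(x_j-x_i)\theta$ with $\theta\in[0,1]$. This gives
\[
\int_{\R}(t-x_i)_+^r(x_j-t)_+^r\,dt=(x_j-x_i)^{2r+1}\int_0^1\theta^r(1-\theta)^r\,d\theta=(x_j-x_i)^{2r+1}B(r+1,r+1).
\]
Here $B(r+1,r+1)=\frac{(r!)^2}{(2r+1)!}=c_r^{-1}$, by the standard Beta--Gamma identity. Multiplying by $c_r$ gives \eqref{eq:beta} when $x_i<x_j$, and antisymmetry gives the case $x_i>x_j$.

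None of these steps is a real obstacle; the only point needing care is bookkeeping. One must check that the convention $\Pf$, $A_{ij}=(x_j-x_i)^D$ matches the orientation $L_iR_j-L_jR_i$, and that $r=0$ is handled consistently. If one prefers to avoid the Beta function, one can instead verify $\int_0^1\theta^r(1-\theta)^r\,d\theta=\frac{(r!)^2}{(2r+1)!}$ by $r$ repeated integrations by parts.
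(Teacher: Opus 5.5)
Your proposal is correct and follows essentially the same route as the paper: for $x_i<x_j$ only $L_iR_j$ survives, on $(x_i,x_j)$; an affine rescaling reduces the integral to $\int_0^1\theta^r(1-\theta)^r\,d\theta=(r!)^2/(2r+1)!$; and skew-symmetry handles the remaining cases. Your extra remarks on $r=0$ and on the Pfaffian orientation are harmless but not needed here, since $r\geq m-1\geq1$ in this section.
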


\begin{proof}
For $i<j$, only the first product is nonzero, and only for
$x_i<t<x_j$.  After affine rescaling, the integral is
\[
 (x_j-x_i)^{2r+1}\int_0^1u^r(1-u)^r\,du
 =\frac{(r!)^2}{(2r+1)!}(x_j-x_i)^{2r+1}.
\]
Skew-symmetry gives the remaining cases.
\end{proof}

We now apply the exterior form of de Bruijn's multiple-integral identity
\cite{deBruijn1955}.

\begin{proposition}[Ordered Pfaffian integral]\label{prop:pf-integral}
Let $A=((x_j-x_i)^{2r+1})_{i,j=1}^{2m}$.  Then
\begin{equation}\label{eq:pf-integral}
 (-1)^{\binom m2}\Pf A
 =c_r^m\int_{t_1<\cdots<t_m}H_r(X;t,t)\,dt.
\end{equation}
\end{proposition}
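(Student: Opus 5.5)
We will derive \eqref{eq:pf-integral} in three steps: write $A$ as an integral of rank-two skew forms using Lemma~\ref{lem:beta}, apply de Bruijn's exterior identity to pass from the Pfaffian to an $m$-fold ordered integral of a $2m\times2m$ determinant, and then reorder the columns of that determinant into the split form \eqref{eq:Hp} and track the resulting sign.

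\textbf{Exterior form.} By Lemma~\ref{lem:beta}, $A=c_r\int_{\R}\omega(t)\,dt$, where $\omega(t)=L(t)R(t)^{\mathsf T}-R(t)L(t)^{\mathsf T}$. Equivalently, the $2$-form $\sum_{i<j}A_{ij}e_i\wedge e_j$ equals $c_r\int_{\R}L(t)\wedge R(t)\,dt$. Using $\Pf(A)\,e_1\wedge\cdots\wedge e_{2m}=\frac{1}{m!}\bigl(\sum_{i<j}A_{ij}e_i\wedge e_j\bigr)^{\wedge m}$, we obtain
\[
 \Pf(A)\,e_1\wedge\cdots\wedge e_{2m}
 =\frac{c_r^m}{m!}\int_{\R^m}\bigwedge_{k=1}^m \bigl(L(t_k)\wedge R(t_k)\bigr)\,dt.
\]
Two-forms commute, so the integrand is symmetric in $(t_1,\ldots,t_m)$. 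The $m!$ order chambers therefore contribute equally, which cancels the $1/m!$ and restricts the integral to $t_1<\cdots<t_m$. This is exactly de Bruijn's identity. Integrability is clear, since the integrand is bounded and supported in $[x_1,x_{2m}]^m$.

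\textbf{Reordering and sign.} Next we read off the coefficient of $e_1\wedge\cdots\wedge e_{2m}$. The wedge $L(t_1)\wedge R(t_1)\wedge\cdots\wedge L(t_m)\wedge R(t_m)$ equals $\det[L(t_1),R(t_1),\ldots,L(t_m),R(t_m)]$. We move the columns into the order $L(t_1),\ldots,L(t_m),R(t_1),\ldots,R(t_m)$. Bringing $L(t_k)$ past $R(t_1),\ldots,R(t_{k-1})$ costs $k-1$ transpositions, so the total sign is $(-1)^{\sum_k(k-1)}=(-1)^{\binom m2}$. The reordered determinant is precisely $H_r(X;t,t)$ from \eqref{eq:Hp}: for $s_j=u_j=t_j$, the left columns are $(t_j-x_i)_+^r$ and the right columns are $(x_i-t_j)_+^r$. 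Hence
\[
 \Pf A=(-1)^{\binom m2}c_r^m\int_{t_1<\cdots<t_m}H_r(X;t,t)\,dt,
\]
which is \eqref{eq:pf-integral}.

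\textbf{Main obstacle.} The delicate point is the normalization bookkeeping: the convention $\Pf\begin{pmatrix}0&a\\-a&0\end{pmatrix}=a$ must match the choice of $\sum_{i<j}A_{ij}e_i\wedge e_j$, and the factor $m!$ must cancel against the chamber count and against the $m$-th power expansion. As a check, take $m=1$: the left side is $A_{12}=(x_2-x_1)^{2r+1}$, and the right side is $c_r\int(t-x_1)_+^r(x_2-t)_+^r\,dt$, minus a term that vanishes because $(t-x_2)_+^r(x_1-t)_+^r\equiv0$; these agree by Lemma~\ref{lem:beta}. A second point is that $r=0$ is possible when $m=1$, where indicators occur. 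Boundary faces such as $t=x_i$ have measure zero, so they cause no difficulty.
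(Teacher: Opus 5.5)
Your proposal is correct and follows the same route as the paper. You write $A=c_r\int_{\R}(LR^T-RL^T)\,dt$, expand the Pfaffian as $\frac{1}{m!}\omega^{\wedge m}$ to get the labelled de Bruijn integral with coefficient $c_r^m/m!$, and use symmetry of the integrand to cancel $1/m!$ against the $m!$ chambers; you then regroup the columns with sign $(-1)^{\binom m2}$, moving each $L(t_k)$ left past $R(t_1),\ldots,R(t_{k-1})$, which is the same transposition count as the paper's move of each $R(t_j)$ past $L(t_{j+1}),\ldots,L(t_m)$.
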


\begin{proof}
Equation \eqref{eq:beta} says
\[
 A=c_r\int_{\R}\bigl(L(t)R(t)^T-R(t)L(t)^T\bigr)\,dt.
\]
Exterior expansion gives
\begin{align}
 \Pf A
 =\frac{c_r^m}{m!}\int_{\R^m}
 \det[L(t_1),R(t_1),\ldots,L(t_m),R(t_m)]\,dt.           \label{eq:debruijn-full}
\end{align}
Swapping two variables exchanges two blocks of two columns and has sign
$+1$.  The $m!$ order chambers therefore cancel the factor $1/m!$ in
\eqref{eq:debruijn-full}.  Moving each $R(t_j)$ past
$L(t_{j+1}),\ldots,L(t_m)$ requires
\[
 \sum_{j=1}^{m}(m-j)=\binom m2
\]
column swaps.  The grouped determinant is exactly $H_r(X;t,t)$, which proves
\eqref{eq:pf-integral}.  No factor $2^m$ occurs because the orientation
inside each pair is already fixed by $LR^T-RL^T$.
\end{proof}

For $\varepsilon>0$, set
\[
 U^{(\varepsilon)}=(t_1+\varepsilon,\ldots,t_m+\varepsilon).
\]
Theorem~\ref{thm:split} gives
$H_r(X;t,U^{(\varepsilon)})\geq0$.  Since $r\geq m-1\geq1$, truncated
powers of degree $r$ are continuous in their knots; hence, on letting
$\varepsilon\downarrow0$, we obtain $H_r(X;t,t)\geq0$.  Thus the integrand
in \eqref{eq:pf-integral} is nonnegative.  It remains only to prove that it
is not almost everywhere zero.

\section{Strictness and proof of the main theorem}\label{sec:strict}

Consider the open central simplex
\begin{equation}\label{eq:central-simplex}
 x_m<t_1<\cdots<t_m<x_{m+1}.
\end{equation}
On this set the split matrix is block diagonal, so
\begin{equation}\label{eq:block-factor}
 H_r(X;t,t)
 =\det[(t_j-x_i)^r]_{i,j=1}^{m}
  \det[(x_{m+i}-t_j)^r]_{i,j=1}^{m}.
\end{equation}

\begin{lemma}[Separated powers]\label{lem:separated}
If $y_1<\cdots<y_m<z_1<\cdots<z_m$ and $r\geq m-1$, then
\begin{equation}\label{eq:separated-left}
 \det[(z_j-y_i)^r]_{i,j=1}^{m}>0.
\end{equation}
Likewise,
\[
 \det[(w_i-z_j)^r]_{i,j=1}^{m}>0
\]
whenever $z_1<\cdots<z_m<w_1<\cdots<w_m$.
\end{lemma}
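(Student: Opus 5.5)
The plan is to translate the nodes, expand the power binomially with positive coefficients, and apply Cauchy--Binet, so that every term is a product of two generalized Vandermonde determinants of one sign. Both sides of \eqref{eq:separated-left} are invariant under a common translation of all nodes, so I first shift so that $y_m<0<z_1$. Writing $\eta_i=-y_i>0$, we get $\eta_1>\cdots>\eta_m>0$ and $0<z_1<\cdots<z_m$. Then
\[
 (z_j-y_i)^r=(z_j+\eta_i)^r=\sum_{k=0}^{r}\binom rk\,\eta_i^{\,r-k}z_j^{\,k},
\]
so the matrix factors as $PQ$, where $P_{ik}=\binom rk\eta_i^{r-k}$ is $m\times(r+1)$ and $Q_{kj}=z_j^{k}$ is $(r+1)\times m$.

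By Cauchy--Binet,
\[
 \det[(z_j-y_i)^r]=\sum_{K}\Bigl(\prod_{k\in K}\binom rk\Bigr)\det[\eta_i^{\,r-k_l}]_{i,l}\,\det[z_j^{\,k_l}]_{l,j},
\]
where the sum runs over $m$-subsets $K=\{k_1<\cdots<k_m\}\subseteq\{0,\ldots,r\}$. Since $r\geq m-1$, at least one such subset exists. I then invoke the classical fact that a generalized Vandermonde determinant $\det[x_j^{e_l}]$ is strictly positive when $0<x_1<\cdots<x_m$ and the real exponents satisfy $e_1<\cdots<e_m$. This follows from Descartes' rule of signs: a nonzero combination $\sum_l c_lx^{e_l}$ has at most $m-1$ positive zeros, and a continuity/deformation argument from the ordinary Vandermonde fixes the sign.

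\begin{itemize}
\item The factor $\det[z_j^{k_l}]$ is positive directly.
\item In the factor $\det[\eta_i^{r-k_l}]$, both the points $\eta_i$ and the exponents $r-k_l$ are decreasing. Reversing the row order and the column order together multiplies the determinant by $(\pm1)^2=1$ and brings it to the standard increasing form, so it is positive as well.
\end{itemize}

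Every term is therefore strictly positive, and the sum is nonempty, which proves \eqref{eq:separated-left}.

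For the second inequality, set $w'_i=-w_i$ and $z'_j=-z_j$. Then $w_i-z_j=z'_j-w'_i$, and $w'_m<\cdots<w'_1<z'_m<\cdots<z'_1$. Reindexing both lists increasingly reverses both the rows and the columns of the matrix, again with total sign $+1$. The result is the matrix of \eqref{eq:separated-left} with $y=w'$ and $z=z'$, so its determinant is positive.

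The only nonroutine ingredient is the positivity of generalized Vandermonde determinants, which is classical (Gantmacher--Krein). The rest is sign bookkeeping, which the simultaneous row-and-column reversals keep trivial.
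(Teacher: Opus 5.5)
Your proposal is correct and follows essentially the paper's own proof: translate so a point of the gap becomes the origin, expand binomially, and apply Cauchy--Binet. Each term is then a positive binomial weight times two generalized Vandermonde determinants of matching sign, and the sum is nonempty because $r\geq m-1$. Your simultaneous row-and-column reversal replaces the paper's two factors of $(-1)^{\binom m2}$. Your reflection argument for the second inequality is valid, though the paper gets it more directly by applying the first inequality to the lists $z<w$ and transposing.
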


\begin{proof}
Choose $c$ strictly between $y_m$ and $z_1$, and put
$Y_i=c-y_i>0$ and $Z_j=z_j-c>0$.  The $Y_i$ decrease while the $Z_j$
increase.  Expand
\begin{equation}\label{eq:shifted-binomial}
 (z_j-y_i)^r=(Y_i+Z_j)^r
 =\sum_{k=0}^{r}\binom rkY_i^kZ_j^{r-k}.
\end{equation}
For every increasing exponent subset in Cauchy--Binet, the generalized
Vandermonde in the decreasing $Y_i$ has sign
$(-1)^{\binom m2}$.  The $Z_j$ determinant has decreasing exponents and the
same sign.  Their product is positive, and all binomial weights are
positive.  The subset $0,1,\ldots,m-1$ occurs because $r\geq m-1$, so the
sum is strict.  The second assertion is the first assertion applied to the
two lists $z_1<\cdots<z_m<w_1<\cdots<w_m$, followed by transposition.
\end{proof}

\begin{proof}[Proof of Theorem~\ref{thm:main}]
Lemma~\ref{lem:separated} and \eqref{eq:block-factor} show that
$H_r(X;t,t)>0$ throughout the open simplex
\eqref{eq:central-simplex}, which has positive measure.  The integrand in
\eqref{eq:pf-integral} is nonnegative everywhere by
Theorem~\ref{thm:split}.  Therefore the integral is strictly positive and
\eqref{eq:main-sign} follows.  Since $A$ is skew-symmetric,
$\det A=(\Pf A)^2>0$, proving \eqref{eq:main-det}.
\end{proof}

\section{Boundary and sharpness remarks}\label{sec:remarks}

\begin{remark}[Coincident knot lists]
The Marsden calculation was performed first with $S\cup U$ distinct.  For
$m\geq3$, the critical degree $m-2$ is at least one, so cross-family
coincidences follow by continuity.  In the Pfaffian application the two lists
are equal, $S=U=t$, but the exponent $r\geq m-1\geq1$, so the same
continuity argument applies.  The discontinuous case $m=2,d=0$ is handled
without B-splines in Appendix~\ref{app:m2}.
\end{remark}

\begin{remark}[Sharpness of the paired threshold]\label{rem:sharpness}
The degree in Theorem~\ref{thm:split} cannot be lowered uniformly.  For
$m=3,p=0=m-3$,
\[
 X=(0,3,5,7,11,14),\quad S=(2,6,8),\quad U=(4,10,12)
\]
gives $H_0=-1$, whereas $H_1=36$.  For $m=4,p=1=m-3$,
\begin{align*}
 X&=(0,13,20,38,60,86,88,100),\\
 S&=(4,50,70,83),\\
 U&=(27,66,78,90)
\end{align*}
gives $H_1=-9{,}609{,}600$.
\end{remark}

\begin{remark}[Scope and provenance]
The new argument of this paper is Theorem~\ref{thm:main}, the odd-exponent
case.  The even-exponent part of Theorem~\ref{thm:colombo} is classical and
follows from Dyn--Goodman--Micchelli \cite{DynGoodmanMicchelli1986}.  The
complete classification is obtained by combining these two inputs with the
elementary low-rank and odd-dimensional skew-symmetry obstructions.  The
historical discussion in the Introduction records the sources located in
preparing this draft; it is not intended as a claim that no equivalent
formulation could occur elsewhere in the older PDE literature.
\end{remark}

\appendix

\section{The degree-zero boundary}\label{app:m2}

We prove Proposition~\ref{prop:critical} when $m=2$.  Let
\[
 p_k=(\one_{i\leq k})_{i=1}^{4},\qquad
 q_k=(\one_{i\geq k})_{i=1}^{4},
\]
and set
\begin{equation}\label{eq:ell-r}
 \ell_j=\#\{i:x_i<s_j\},\qquad
 r_j=1+\#\{i:x_i\leq u_j\}.
\end{equation}
With the strict convention $y_+^0=\one_{y>0}$, the split matrix is
\begin{equation}\label{eq:m2-split}
 [p_{\ell_1},p_{\ell_2},q_{r_1},q_{r_2}],
\end{equation}
where
\begin{equation}\label{eq:m2-order}
 \ell_1\leq\ell_2,\qquad r_1\leq r_2,\qquad \ell_j<r_j.
\end{equation}

Apply the determinant-one row-difference map
\[
 (v_1,v_2,v_3,v_4)\longmapsto
 (v_1-v_2,v_2-v_3,v_3-v_4,v_4).
\]
It sends $p_k$ to $e_k$ and $q_r$ to $e_4-e_{r-1}$, with the natural
zero-vector conventions at the endpoints.  Therefore
\eqref{eq:m2-split} becomes
\begin{equation}\label{eq:m2-basis-det}
 \det[e_{\ell_1},e_{\ell_2},
      e_4-e_{r_1-1},e_4-e_{r_2-1}].
\end{equation}
Zero or repeated columns eliminate all but six index quadruples compatible
with \eqref{eq:m2-order}.  Direct substitution shows that the determinant
is nonzero only for
\[
 (\ell_1,\ell_2,r_1,r_2)
 =(1,2,2,4),(1,2,3,4),(1,3,3,4),
\]
and equals $1$ in all three cases.  It is zero otherwise.  Hence
$H_0(X;S,U)\geq0$ for every endpoint configuration, including sampled
points on knots and cross-family coincidences.

\section{Normalization and sign ledger}\label{app:ledger}

For ease of checking, we collect the constants and signs used in the proof.

\begin{description}
\item[Marsden scalar.]  For normalized degree-$d$ B-splines,
\[
 (y-x)^d=\sum_i
 \left(\prod_{h=1}^{d}(y-T_{i+h})\right)B_{i,d}(x).
\]
Thus \eqref{eq:marsden-factor} has scalar one.  Reversing the $d$ factors
in a right column cancels the reflection sign $(-1)^d$.

\item[Seam sign.]  In Lemma~\ref{lem:small-seam}, the complete exponent is
$b(q-1)+b\ell=b(b-1)+2b\ell$, hence even.

\item[Condensation.]  Desnanot--Jacobi has the minus sign displayed in
\eqref{eq:DJ}; Lemma~\ref{lem:width-one} forces one cross term to vanish,
and $D_{\mathrm{mid}}>0$.

\item[Pl\"ucker promotion.]  The relation \eqref{eq:plucker} has a plus
sign after the columns are written in increasing order.  Its order-$(k-1)$
factor is positive and the order-$k$ terms have smaller dispersion.

\item[Volterra.]  One lift contributes exactly $p^{2m}$.  Ordering the two
labelled integration blocks creates the two step determinants; it creates
neither $m!$ nor $1/m!$ and no cross-block sign.

\item[Beta and de Bruijn.]  The Beta constant is
$c_r=(2r+1)!/(r!)^2$.  The full labelled de Bruijn integral has coefficient
$c_r^m/m!$, the ordered chamber has coefficient $c_r^m$, and the only
grouping sign is $(-1)^{\binom m2}$.

\item[Strictness.]  The positive-measure witness is the central simplex
\eqref{eq:central-simplex}.  The binomial expansion must be centered inside
the separating gap; an expansion about zero need not have termwise positive
Cauchy--Binet summands.
\end{description}

\newpage
\section*{Acknowledgments}

The author is grateful to Professor Fei Yu of Zhejiang University for
pointing him to a collection of historical mathematical problems and
suggesting that he investigate several of them, including the present one;
for reading an earlier version of the proof; and for valuable guidance on its
presentation, structure, and historical context.

The author also thanks Professor Bin Dong and Dr.~Haocheng Ju of Peking
University, as well as Professor Gergely B\'erczi of Aarhus University, whose
platform ICM Conjectures (\url{https://icmconjectures.com/}) has been a
valuable resource: it was there that the author encountered a number of
worthwhile problems, among them the question of Colombo treated here.  Thanks
are due as well to Professor Chao Wu of Zhejiang University, whose
MO platform supplies a
range of scientific datasets that assisted the author in optimizing the models
used in this work.

\par\medskip
Use of AI-assisted tools. During the preparation of
this manuscript and its accompanying Lean~4
formalization, the author used AI-assisted research and coding tools,
including the WuJie AI agent and other AI-agent systems, as well as DeepSeek,
Qwen, Kimi, GPT, and other large language models.  These generative-AI systems
played a substantial role in identifying the proof strategy and producing an
initial proof draft.  The author designed and coordinated the AI-assisted
research workflow, including task decomposition, coordination of multiple AI
systems, and allocation of computational resources.  The author subsequently
checked and revised the mathematical arguments and formalized them in Lean~4,
and takes full responsibility for the mathematical content of the paper.

\section*{Code availability}

A complete Lean~4 formalization of the new odd-exponent theorem and its proof
chain is publicly available at
\url{https://github.com/hkjtsgmc79-boop/colombo-odd-lean}.
The software is released under the Apache License~2.0.
The development is pinned to Lean~4.30.0 and mathlib~4.30.0 (commit
\texttt{c5ea00351c28e24afc9f0f84379aa41082b1188f}).  In the audited build,
\texttt{lake build ColomboGeneralK2} completed all $2817$ jobs.  The
production sources contain no \texttt{sorry}, \texttt{admit}, or
project-specific axioms; the final theorem's axiom audit reports only
\texttt{propext}, \texttt{Classical.choice}, and \texttt{Quot.sound}.

\end{document}